\documentclass[letterpaper, 10 pt, conference]{ieeeconf}  
\usepackage{colortbl}
\definecolor{mygray}{gray}{.9}
\usepackage{cite}
\usepackage{color}
\usepackage{amsfonts,amssymb}
\usepackage{bm}
\usepackage{amsmath}
\usepackage{multirow}
\usepackage{algorithmic}
\usepackage{graphicx}
\usepackage{wrapfig}
\usepackage{subfigure}
\usepackage{float}
\usepackage{textcomp}
\usepackage{graphicx} 
\usepackage{epstopdf}
\usepackage{array}
\usepackage[thmmarks,amsmath]{ntheorem}
\newtheorem{mythm}{Theorem}
\newtheorem{mydef}{Definition}

\newtheorem{assumption}{Assumption}
\newtheorem{lemma}{Lemma}

\newtheorem{corollary}{Corollary}

\usepackage[title]{appendix}

\usepackage{array}
\usepackage{enumerate}
\usepackage{booktabs}
\usepackage{algorithm}
\usepackage{algorithmic}
\usepackage{setspace}
\usepackage{float}
\usepackage{cases}

\makeatletter\@mparswitchfalse\makeatother\normalmarginpar 
\usepackage[textwidth=1cm, textsize=scriptsize]{todonotes}

\usepackage{mathrsfs}

\IEEEoverridecommandlockouts                              
\renewcommand{\QED}{\QEDopen}
\DeclareRobustCommand{\qeds}{%
  \ifmmode \qed
  \else
    \leavevmode\unskip\penalty9999 \hbox{}\nobreak\hfill
    \quad\hbox{$\QED$}%
  \fi
}

\title{\LARGE \bf
Adaptive Mitigation of Insider Threats via Off-Policy Learning
}

\author{ 
Gehui Xu$^{1}$, Kaiwen Chen$^{2}$, Zhong-Ping Jiang$^{3}$, Thomas Parisini$^{4}$, and Andreas A. Malikopoulos$^{5}$ 
\thanks{This work is supported in part by NSF under Grants CNS-2227153, CNS-2401007, CMMI-2348381, IIS-2415478, in part by MathWorks, and in part by EPSRC Grant  EP/X033546/1.}
\thanks{$^{1}$G. Xu is with the 
School of Civil $\&$ Environmental Engineering, Cornell University, Ithaca, NY, USA. {\tt\small gx62@cornell.edu}}%
\thanks{$^{2}$K. Chen is with the Department of Electrical and Electronic Engineering, Imperial College London, UK.
        {\tt\small kaiwen.chen16@imperial.ac.uk}}
    \thanks{$^{3}$ Z. Jiang  is with the CAN Lab, New York University, Brooklyn, NY
11201 USA. {\tt\small zjiang@nyu.edu}}    %
        \thanks{$^{4}$T. Parisini is with  the Dept of Electrical and Electronic Engineering,
	Imperial College London, London SW7 2AZ, UK, and also with the Dept of Electronic Systems, Aalborg University, Denmark, and with the  Dept.
	of Engineering and Architecture, University of Trieste, Italy.
         {\tt\small t.parisini@imperial.ac.uk}}
        \thanks{$^{5}$A. A. Malikopoulos is with the Applied Mathematics, Systems Engineering, Mechanical Engineering, Electrical \& Computer Engineering, and School of Civil \& Environmental Engineering, Cornell University, Ithaca, NY, USA.   {\tt\small amaliko@cornell.edu}}
}

\begin{document}

\maketitle
\thispagestyle{empty}
\pagestyle{empty}

\begin{abstract}
An insider is a team member who covertly deviates from the team’s optimal collaborative strategy to pursue a private objective while still appearing cooperative. Such an insider may initially behave cooperatively but later switch to selfish or malicious actions, thereby degrading collective performance, threatening mission success, and compromising operational safety. In this paper, we study such insider threats within an insider-aware, game-theoretic formulation, where the insider interacts with a decision maker (DM) under a continuous-time switched system, with each time interval characterized by a distinct insider behavioral pattern or threat level. We develop a periodic off-policy mitigation scheme that enables the DM to learn optimal mitigation policies from online data when encountering different insider threats, without requiring a priori knowledge of insider intentions.
By designing appropriate conditions on the inter-learning interval time, we establish convergence guarantees for both the learning process and the closed-loop system, and characterize the corresponding mitigation performance achieved by the DM.

\end{abstract}

\section{INTRODUCTION}

An insider is a team member who covertly deviates from the team-optimal collaborative strategy in pursuit of a private objective, while maintaining an outward appearance of cooperation~\cite{xu2025game}. Depending on task requirements, environmental conditions, or personal incentives, such an insider may initially behave cooperatively but later switch to selfish or even malicious strategies at an unknown time, leaving other team members unaware of their true intentions. The hidden and time-varying nature of such behaviors makes insider threats particularly difficult to detect and mitigate, and the impact of insider-related incidents has become increasingly significant in recent years. According to a recent  report~\cite{PonemonDTEX2025}, the average annual cost associated with insider incidents increased by nearly 50$\%$ between 2019 and 2025. 

The resulting insider behaviors can severely degrade team performance and, in some cases, jeopardize team safety. For example, in a cooperative lane-change or lane-merge scenario~\cite{zhang2024stackelberg,rios2016survey}, a following vehicle may decelerate to create a merging gap but then covertly accelerate during the maneuver to sideswipe the merging vehicle, thereby shifting collision liability and facilitating insurance fraud.
As another example, in a human–robot collaboration task~\cite{sheng2025human,wang2022bounded}, the human may cooperate fully in the early stages but subsequently reduce their contribution or deviate from the shared goal, thus preserving energy. Such behaviors shift the workload to the robot and increase the risk of task failure.
Indeed, in microgrids, a malicious insider may leak sensitive topology information to external adversaries, enabling false-data injection attacks that manipulate voltage or current measurements and potentially trigger instability or widespread outages \cite{Gonen2020FDI}.  
Nevertheless, most existing research on safety and security in intelligent systems focuses on external adversaries, including secure coordination~\cite{farokhi2017private}, adversarial learning~\cite{chen2024approaching}, 
denial of service attack~\cite{Sayan_CSL2025},
and threat diagnosis mechanisms~\cite{zhang2025threat}. Insider threats arising from within cooperative teams remain largely underexplored.

Interactions between the insider and other team members are typically analyzed within game-theoretic frameworks~\cite{liu2020defense,liu2021flipit,hu2015dynamic,xu2024consistency}. However, most existing models assume that cooperative members have full knowledge of the insider’s behavior, an assumption that rarely holds in practice.  In our recent work~\cite{xu2025game}, we study unknown insider threats and develop an online indirect dual adaptive control approach to identify and mitigate such covert behaviors. 
Nevertheless, the insider behavior in \cite{xu2025game} is assumed to follow a fixed mode. In practice, the insider may covertly switch among different behavioral modes, such as cooperative, selfish, or even adversarial.
This motivates further investigation of insider threats with switching behaviors and mode uncertainty.

Regarding the problem of inferring unknown agent behaviors and strategies, a classical approach is to design an indirect adaptive optimal control law by first identifying the system parameters and then solving the associated optimality conditions, such as the Hamilton–Jacobi–Bellman (HJB) equation or the algebraic Riccati equation (ARE). However, adaptive systems designed this way
are known to respond slowly to parameter variations from the plant.  Reinforcement learning~\cite{sutton1998reinforcement} and approximate/adaptive dynamic programming (ADP)~\cite{werbos1974beyond}  have been widely applied to solve optimal control problems for uncertain systems in recent years~\cite{jiang2012computational}.  ADP
is a non-model-based method that can directly approximate the
optimal control policy via online learning~\cite{jiang2012computational}. The idea of ADP can be traced back to the seminal work by Paul Werbos~\cite{werbos1968elements} that brings reinforcement learning and dynamic programming together to solve Bellman’s equation.
Over the past two decades, attention has mainly been given to ADP-related control design for different systems~\cite{jiang2012computational,gao2016adaptive,bian2021reinforcement}. 

In this paper, we develop a periodic ADP learning scheme to mitigate insider threats with varying intentions.
Specifically, we formulate a game between an insider and a decision maker (DM) under strategic insider threats, and model their interaction using an unknown continuous-time switched system, where each time interval is characterized by a distinct behavioral pattern or threat level.
Mitigation policies under insider switching typically exhibit only partial steady-state regulation, which can lead to steady-state bias in learning and mitigation. To address this issue, we incorporate a Proportional--Integral augmentation to remove steady-state offsets, and further introduce incremental variables with a delay-based design to remove the influence of unknown mode-dependent disturbances induced by the insider's private objectives.
Then we develop a periodic off-policy mitigation scheme that enables the DM to learn optimal mitigation policies from online data, without requiring a priori knowledge of the system dynamics or the switching instants. By designing appropriate conditions on the inter-learning interval time, we establish convergence guarantees for both the learning process and the closed-loop system, and characterize the corresponding mitigation performance achieved by the DM.

The remainder of the paper is organized as follows. Section II introduces the team game model, Section III presents the off-policy learning–based mitigation approach, Section IV validates the framework via simulations, and Section VI concludes with future research directions.




\section{The Team Game Model}

\textbf{Notation.} Let \(\mathbb{N}^+\) denote the set of positive integers and \(\mathbb{R}\) denote the set of real numbers.
We use \(\mathbb{R}^{n}\) (or \(\mathbb{R}^{m\times n},\; m,n\in\mathbb{N}^+\)) to denote the set of \(n\)-dimensional real column vectors (or real \(m\)-by-\(n\) matrices). 
Let \(I_n\) denote the \(n \times n\) identity matrix, $\otimes$ denote the Kronecker product, and $\operatorname{vec}(A)\in\mathbb{R}^{mn}$ denote the column-wise vectorization of a matrix $A\in \mathbb{R}^{m\times n}$.  The logarithmic norm associated with $\|\cdot\|_2$ is defined as
$\mu_2(A)=\lim_{h\to0^+}(\|I+hA\|_2-1)/h$, which yields
$\mu_2(A)=\lambda_{\max}((A+A^\top)/2)$.
If $\mu_2(A)<0$, then $A$ is Hurwitz.

In this section, we model a nominal single-mode cooperative system as a two-player team game between a DM and a potential insider. Then, we characterize how different insider activations can alter the system’s evolution and, from the DM’s perspective, induce switched modes of behavior.

\subsection{Single-Mode Cooperative Model with Insider Threat}
Consider a two-player dynamic system  described by
\begin{equation}\label{dynamics}
\dot{x} = f(x) + g_1(x)u_1 + g_2(x)u_2,
\end{equation}
where $x(t)\in\mathbb{R}^n$ is the system state,
$f:\mathbb{R}^n\to\mathbb{R}^n$,
$g_1(x),g_2(x)\in\mathbb{R}^{n\times m}$ are smooth mappings,
and $u_1\in \mathbb{R}^m$ and $u_2\in \mathbb{R}^m$
are the control inputs of the DM and the insider, respectively.
Both players are assumed to have full observation of the system state.
The nominal collaboration model is formulated as a team game in which the DM and the insider cooperate and share the same information and a common objective to minimize:
\begin{align}\label{cooperative}
\mathcal{C}
=\int_0^\infty
& \left [ (x(t)-x_c^r)^\top Q_c (x(t)-x_c^r)
+ u_1(t)^\top R_1 u_1(t) \right .
\notag\\
&\left . + u_2(t)^\top R_2 u_2(t) \right ] \,dt,
\end{align}
where $Q_c \succ 0$ and $R_1, R_2 \succ 0$ 
denote real symmetric and positive definite  matrices,
and $x_c^r\in\mathbb{R}^n$ denotes the desired reference state associated with the collaborative objective. 
Common cost functions such as~\eqref{cooperative} may represent a variety of practical scenarios. For instance, in a lane-change scenario~\cite{zhang2024stackelberg,falsone2022lane}, the leading vehicle and the following vehicle aim to maintain a desired safety distance and a preferred speed. Or, in a human–robot interaction scenario~\cite{sheng2025human,wang2022bounded}, a robot and a human jointly move a heavy object to a desired location. 
Further details can be found in~\cite{xu2025game}.

We consider players making decisions under a dynamic closed-loop information structure, i.e., the DM and the insider choose instantaneous control strategies $u_1(x)$ and $u_2(x)$ based on the observed state $x$. A pair of feedback strategies $(u_1,u_2)$ is said to be admissible
if the resulting closed-loop system admits a (locally) asymptotically
stable equilibrium at $x=x_c^r$.
Then, the team outcome resulting from the joint decisions is characterized by the team-optimal solution~\cite{xu2025does,xu2025game}.
This solution corresponds to a strategy profile under which no unilateral or joint deviation by players can yield improved collective performance~\cite{radner1962team,zoppoli2020neural,Malikopoulos2021}.
\begin{mydef}\label{def:team}
A pair of feedback strategies $(u_1^*,u_2^*)$ is called a team-optimal solution to the cooperative problem \eqref{dynamics}--\eqref{cooperative} if, for all admissible feedback strategies $(u_1,u_2)$,
\[
\mathcal{C}(u_1^*,u_2^*;x_0)\le \mathcal{C}(u_1,u_2;x_0).
\]
\end{mydef}

In practice, the insider may appear to contribute to the team's task while secretly optimizing its own objective,
with the DM being unaware of this intention.
Instead of minimizing \eqref{cooperative}, the insider minimizes an alternative objective that blends a private goal with a penalty on deviations from
the nominal cooperative behavior:
\begin{align}\label{insider_obj}
\mathcal{C}^{\mathrm{adv}}_2
=\int_0^\infty
& \left [ (x(t)-x_a^r)^\top Q_a (x(t)-x_a^r)
+ u_2(t)^\top \tilde R_2 u_2(t) \right . 
\notag\\
& \left . +\rho\big(u_2(t)-u_2^{*}(t)\big)^\top \big(u_2(t)-u_2^{*}(t)\big) \right ] \,dt,
\end{align}
where $Q_a\succ 0$, $\tilde R_2\succ 0$ are insider-specific symmetric positive definite  matrices,
$x_a^r\in\mathbb{R}^n$ denotes the insider’s preferred reference state, and $\rho>0$ is a disciplinary risk parameter.
These parameters are unknown to the DM.
A larger value of $\rho$ corresponds to more cautious and concealed behavioral deviations,
whereas a smaller $\rho$ reflects more aggressive and overt adversarial actions.

Accordingly, knowing that the DM remains unaware of its true intention, the insider selects its best response to the DM's nominal team strategy $u_1^*$:
\[
u_2^\diamond
\in
\arg\min_{u_2}
\mathcal{C}^{\mathrm{adv}}_2(u_1^*,u_2;x_0).
\]
Without awareness of the insider threat, the DM continues to execute $u_1^*$ under the cooperation assumption,
potentially leading to performance degradation or even instability.

\subsection{Insider Activations and Switched Dynamics}

The above formulation focuses on the interaction with a fixed insider behavior.
However, in practice, the insider's intention may evolve.
Different intentions lead to distinct behavioral strategies and closed-loop responses. For example, a surrounding vehicle may covertly switch from cooperative to aggressive behavior in a lane-changing scenario, while a human operator may reduce effort to preserve energy in a collaborative task.

In more precise terms, consider a scenario with a finite number of behavioral phases. 
Let $\sigma(t)\in\mathcal M$ denote the switching signal representing the insider’s behavioral mode, which is piecewise constant.
Moreover, let $\{t_j\}_{j=0}^{J}$ denote the switching time-instants satisfying
$0 = t_0 < t_1 < \cdots < t_J$,
and define $t_{J+1}:=\infty$, so that no further switching occurs after $t_J$. The total number of switches $J<\infty$, and both $J$ and the switching instants $\{t_j\}$ are unknown to the DM.
The switching signal satisfies a minimum dwell-time condition
$\
t_{j+1}-t_j \ge \omega_j$,  $j=0,\dots,J,
$
for some constants $\omega_j>0$.
For each interval $[t_j,t_{j+1})$, define $\sigma_j\in\mathcal M$ as the constant mode, i.e.,
$
\sigma(t)=\sigma_j$,  $t\in[t_j,t_{j+1})$, $j=0,1,\ldots,J$.

Under the insider’s influence, the system evolves as a switched system. 
During each interval $[t_j,t_{j+1})$, the insider adopts a mode-dependent state-feedback policy
\begin{equation}\label{eq:insider_policy}
v(t) = v_j\big(x(t)\big), \qquad t\in[t_j,t_{j+1}),
\end{equation}
where $v_j(\cdot)$ is unknown to the DM and varies across modes due to changes in the insider’s objective. In particular, under a cooperative intention, $v_j(\cdot)$ coincides with the nominal team policy, whereas under selfish or adversarial intentions, it represents the insider’s best-response strategy with respect to its private objective.

Substituting \eqref{eq:insider_policy} into the system dynamics yields
\begin{equation}\label{eq:effective_switched_nonlinear}
\dot x
= f(x)
+ g_1(x)u_1
+ g_2(x)v_j(x),
\quad t\in[t_j,t_{j+1}).
\end{equation}

Equivalently, define the mode-dependent vector field
\[
F_j(x,u_1)
:= f(x)+g_1(x)u_1+g_2(x)v_j(x),
\]
so that
$\dot x(t) = F_j\big(x(t),u_1(t)\big)$.

Therefore, from the DM’s perspective, the physical process is an unknown switched system with unknown switching instants, with the switching induced by the evolution of the insider’s intention. The main question addressed in this paper is how the DM can design an effective mitigation strategy to counter such unknown insider threats.

\section{Off-policy-learning-based mitigation} 
In this section, we develop an off-policy ADP-based method to mitigate insider threats.
The subsequent design is carried out within a linear–quadratic (LQ) framework.

We begin by briefly reviewing the optimal feedback structure in a single mode.
This serves to clarify how an insider’s strategic response modifies the effective system
dynamics from the DM’s perspective, which in turn motivates the linear switched system formulation.
For LQ team games, we assume that   $(A, [B_1, B_2])$ is stabilizable. Together with $R_{1}\succ0$, $R_{2}\succ0$, and $Q_c\succ0$, it is well known that the problem admits a unique optimal feedback solution:
 $$   {u}_i^* = -{K}_i^* x - k_i^*, 
    \quad i \in \{1,2\},$$
where \(K_i^* \!=\! R_i^{-1} B_i^\top P^*\) 
and 
\(
k_i^* \!= \!-K_i^* x_c^r.
\)
The real symmetric matrix \(P^*\) is the positive definite solution to the ARE:
$
    A^\top \!P^* \!+\! P^*A \!+\! Q_c \!-\! P^* B R^{-1} B^\top\! P^*\! =\! 0
$
with \(B \!= \![B_1 \, B_2]\) and \(R \!=\! \operatorname{diag}(R_1, R_2)\).

By substituting \(u_2^*\) into the insider’s true objective function \(\mathcal{C}_2^{\text{adv}}\), 
and noting that \(Q_a \succ 0\), \(\tilde{R}_2 \succ 0\), and \(\rho>0\), 
the insider’s genuine optimal response to the DM’s action \(u_1^*\) 
retains a linear structure, provided the pair \((A - B_1 K_1^*,\, B_2)\) is stabilizable~\cite{anderson2007optimal}:
\begin{equation}\label{eq:true_feedback}
    {u}_2^\diamond = -{K}_2^\diamond x - k_2^\diamond,
\end{equation}
where $K_2^\diamond
= (\tilde{R}_2 + \rho I)^{-1}\big(B_2^\top \mathcal{P}^{\star}_2 + \rho K_2^*\big)$, ${k}_2^\diamond=-K_2^\diamond x_a^r$ and  $\mathcal{P}^{\star}_2$ is the solution of the following ARE 
\begin{align*}\label{eq:riccati_adversarial}
&  \quad (A-B_1K_1^*)^\top \mathcal{P}^{\star}_2 + \mathcal{P}^{\star}_2  (A-B_1K_1^*) + Q_a+\rho K_2^{*\top}  K_2^*\notag\\
   & - \!(\mathcal{P}^{\star}_2 B_2 \!+ \!\rho K_2^{*\top} )(\tilde{R}_2 \!+\! \rho I)^{-1}(B_2^\top \mathcal{P}^{\star}_2 \!+\! \rho K_2^*)      \!=\! 0.
    \end{align*}
Here, the reference state $x_a^r$ is designed such that the steady-state bias 
$(A-B_1K_1^*)x_a^r -B_1k_1^* $ in the closed-loop system $\dot{x}=(A-B_1K_1^*)x+B_2u_2-B_1k_1^*$ vanishes.

Substituting~\eqref{eq:true_feedback} into the system dynamics yields
\begin{equation*}\label{eq:dynamics_true}
    \dot{x} = A x + B_1 u_1 - B_2 {K}_2^\diamond x - B_2 k_2^\diamond.
\end{equation*}
From the DM’s perspective, the system evolves as
$\dot{x} = A_1 x + B_1 u_1 +d$,
where $A_1 = A -  B_2 {K}_2^\diamond$, $d = B_2 k_2^\diamond$, $A_1$ and $d$ satisfy $A_1x_m^r +d = 0$ with $x_m^r \in  \mathbb{R}^n$ denoting the  desired state under mitigation.  
\vspace{-0.2cm}

\subsection{Switched unknown dynamics induced by insider}

As the insider adjusts its strategy sequentially, the resulting interaction naturally evolves into a switched linear system characterized by unknown, time-varying modes:
\begin{equation}\label{eq:dm_mode_dynamics}
\dot x(t) = A_{\sigma_j} x(t) + B_1 u_1(t) + d_{\sigma_j},
\end{equation}
where $A_{\sigma_j}:=A-B_2K_{2,\sigma_j}^\diamond$ and $d_{\sigma_j}:=-B_2 k_{2,\sigma_j}^\diamond$ are unknown to the DM and depend on unknown $\sigma_j$.

Since $A_{\sigma_j}$ and $d_{\sigma_j}$ are unknown, the associated equilibrium point
$x_{\sigma_j}^{r}\in\mathbb{R}^{n}$ satisfying $A_{\sigma_j}x_{\sigma_j}^{r}+d_{\sigma_j}=0$, which corresponds to the desired mitigation objective, is also unknown.
In practice, the DM typically only knows partial information about $x_{\sigma_j}^{r}$. For instance, the safety distance is known to the DM, whereas the steady-state behavior required to achieve it (such as the corresponding velocity or posture) is unknown a priori.

To eliminate the dependence on the unknown components of $x_{\sigma_j}^{r}$, we
partition  it as 
$
x_{\sigma_j}^{r}
\!\!=\!
[x_{\sigma_j,d}^{r\top},
x_{\sigma_j,s}^{r\top}]^\top,
$
where $x_{\sigma_j,d}^{r}\!\in\!\mathbb{R}^{s}$ denotes the  available reference information, while $x_{\sigma_j,s}^{r}$ represents the unknown part.
The available information can be expressed as an output constraint
$
C x_{\sigma_j}^{r} \!= \!x_{\sigma_j,d}^{r},
$
where $C\in\mathbb{R}^{s\times n}$ selects the regulated components of the state whose references are known.
The equilibrium $x_{\sigma_j}^{r}$ is then uniquely determined by $A_{\sigma_j}x_{\sigma_j}^{r}\!+\!d_{\sigma_j}\!=\!0$ and $C x_{\sigma_j}^{r} \!= \!x_{\sigma_j,d}^{r}$.

On this basis,  we augment the state with an integral-type variable $z\in\mathbb{R}^{s}$ and define
$\xi:=[x;z]\in\mathbb{R}^{q}$ with $q=n+s$.
For each mode, this yields the augmented dynamics
\begin{equation}\label{eq:aug_mode_dynamics}
\dot \xi(t)
=
\mathcal A_{\sigma_j} \xi(t)
+
\mathcal B_{\sigma_j} u_1(t)
+
\bar d_{\sigma_j},
\end{equation}
where
\[
\mathcal A_{\sigma_j} \!=\!
\begin{bmatrix}
A_{\sigma_j} & E\\
C & 0_{s\times s}
\end{bmatrix},
\mathcal B_{\sigma_j}\!=\!\!
\begin{bmatrix}
B_{1,\sigma_j}\\
0_{s\times m_1}
\end{bmatrix},
\bar d_{\sigma_j}\!=\!\!
\begin{bmatrix}
d_{\sigma_j}\\
- x_{\sigma_j,d}^{r}
\end{bmatrix}.
\]
Here, $E\in\mathbb{R}^{n\times s}$ specifies how the integral
state $z$ is injected into the system dynamics.
Thus, the control policy can be expressed as
$
u_1(t) = -\mathcal K_j\,\xi(t),
$
without requiring explicit knowledge of  $x_{\sigma_j}^{r}$.
For notational simplicity, we write
$(\mathcal A_j,\mathcal B_j,\bar d_j)
:=
(\mathcal A_{\sigma_j},\mathcal B_{\sigma_j},\bar d_{\sigma_j})$ in the sequel.

For each mode $\sigma_j$, the mitigation-oriented cost  is
\begin{equation}\label{eq:mode_cost}
\mathcal C_{1,j}^{\mathrm{mit}}
=
\int_{0}^{\infty} \left [
(\xi-\xi_j^{r})^\top \mathcal Q (\xi-\xi_j^{r})
+
u_1^\top \tilde R_{1} u_1 \right ] \,\mathrm{d}t,
\vspace{-0.2cm}
\end{equation}
where $\mathcal Q\succ 0$ and $\tilde R_1\succ 0$ are designed symmetric positive definite  matrices
 and $\xi_j^{r}\in\mathbb{R}^q$ denotes the steady-state of the augmented system. Although each mode is active over a finite interval, the learned controller approximates the corresponding infinite-horizon LQR solution when the dwell time is sufficiently long. Note that  $\xi_j^{r}$ appears only in the cost objective and 
 is not required for controller synthesis \cite{young1972approach}. 
 
Since $\mathcal Q\succ 0$ and $\tilde R_1\succ 0$, provided that  \((\mathcal{A}_j, \mathcal{B}_j)\) is stabilizable, 
the optimal control for DM is 
$u_1=-\mathcal{K}_j^\star \xi=-[{K}_{x,j}^\star, \mathcal{K}_{z,j}^\star]\begin{bmatrix}
 x\\ z
\end{bmatrix}$
with $\mathcal{K}_j^\star=\tilde{R}_1^{-1}\mathcal{B}_j^{T}\mathcal{P}^\star_j$
where $\mathcal{P}^\star_j$ satisfies the following ARE equation:
\begin{equation}\label{mig_ARE}
    \mathcal{A}_j^\top \mathcal{P}_j^\star + \mathcal{P}_j^\star \mathcal{A}_j + \mathcal{Q} - \mathcal{P}_j^\star \mathcal{B}_j \tilde{R}_1^{-1} \mathcal{B}_j^\top \mathcal{P}^\star_j = 0;
\end{equation}
hence,
$(\mathcal{A}_j\!-\!\mathcal{B}_j \mathcal{K}_j^\star)\xi_j^{r}\!=\!-\bar{d}_j$.

If the insider’s intention is fully known, the problem reduces to numerically solving \eqref{mig_ARE} for each mode, for example, by using the classical Kleinman’s algorithm \cite{kleinman1968iterative}, which we briefly recall below for the single-mode case.

\begin{lemma}[\cite{kleinman1968iterative}]\label{kleinman}
Let $\mathcal{K}^0 \in \mathbb{R}^{m \times q}$ be any stabilizing feedback gain matrix, i.e.,
$\mathcal{A} - \mathcal{B}\mathcal{K}^0$ is Hurwitz. For $k = 0,1,\ldots$, let $\mathcal{P}^k \in \mathbb{R}^{q \times q}$ be the real symmetric positive definite solution of the Lyapunov equation  \begin{equation}
        \mathcal{A}^{k\top} \mathcal{P}^k + \mathcal{P}^k \mathcal{A}^k + Q + \mathcal{K}^{k\top} \tilde{R}_1 \mathcal{K}^{k} = 0,
        \label{eq:lyap_pk}
        \vspace{-0.2cm}
    \end{equation}
    where  $\mathcal{K}^{k}$ is computed recursively by   \begin{equation}
        \mathcal{K}^{k} =\tilde{R}_1^{-1} \mathcal{B}^\top \mathcal{P}^{k-1}.
        \label{eq:policy_update}
        \vspace{-0.2cm}
    \end{equation}
Then, the following properties hold:
\begin{enumerate}
    \item $\mathcal{A} - \mathcal{B} \mathcal{K}^{k}$ is Hurwitz;
    \item $\mathcal{P}^\star \preceq \mathcal{P}^{k+1} \preceq \mathcal{P}^k$;
    \item $\displaystyle \lim_{k \to \infty} \mathcal{K}^{k}= \mathcal{K}^\star$, 
          $\displaystyle \lim_{k \to \infty} \mathcal{P}^k = \mathcal{P}^\star$.
\end{enumerate}
\end{lemma}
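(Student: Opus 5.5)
The plan is the classical policy-iteration argument: a Lyapunov cost identity, together with the fact that the new gain minimizes a completed square. Write $\mathcal{A}^k:=\mathcal{A}-\mathcal{B}\mathcal{K}^k$ and $\tilde R:=\tilde R_1$, and use as standing hypotheses $\mathcal Q\succ0$ and the stabilizability of $(\mathcal A,\mathcal B)$, so that $\mathcal P^\star\succ0$ exists and $\mathcal A-\mathcal B\mathcal K^\star$ is Hurwitz.

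\textbf{Step 1: stability by induction.} The key algebraic identity, obtained by expanding \eqref{eq:lyap_pk} at index $k$ and substituting $\mathcal{B}^\top\mathcal{P}^k=\tilde R\mathcal{K}^{k+1}$ from \eqref{eq:policy_update}, is
\begin{equation*}
\mathcal{A}^{k+1\top}\mathcal{P}^k+\mathcal{P}^k\mathcal{A}^{k+1}
=-\mathcal Q-\mathcal K^{k+1\top}\tilde R\mathcal K^{k+1}-(\mathcal K^{k}-\mathcal K^{k+1})^\top\tilde R(\mathcal K^{k}-\mathcal K^{k+1}).
\end{equation*}
Suppose $\mathcal A^k$ is Hurwitz. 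The Lyapunov equation then has a unique solution $\mathcal P^k=\int_0^\infty e^{\mathcal A^{k\top}t}(\mathcal Q+\mathcal K^{k\top}\tilde R\mathcal K^k)e^{\mathcal A^kt}dt\succ0$. The identity makes the right-hand side $\preceq-\mathcal Q\prec0$, so $\mathcal P^k$ is a Lyapunov function for $\mathcal A^{k+1}$. Concretely, if $\mathcal A^{k+1}v=\lambda v$ with $\mathrm{Re}\,\lambda\ge0$, then $2\mathrm{Re}\lambda\, v^*\mathcal P^kv=v^*(\cdot)v<0$, which is a contradiction. Hence $\mathcal A^{k+1}$ is Hurwitz, and item 1 follows by induction from $\mathcal K^0$.

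\textbf{Step 2: monotonicity.} Subtract the Lyapunov equation for $\mathcal P^{k+1}$ (written with $\mathcal A^{k+1}$) from the identity above. This gives
\[
\mathcal A^{k+1\top}(\mathcal P^k-\mathcal P^{k+1})+(\mathcal P^k-\mathcal P^{k+1})\mathcal A^{k+1}=-(\mathcal K^k-\mathcal K^{k+1})^\top\tilde R(\cdot)\preceq0 .
\]
Since $\mathcal A^{k+1}$ is Hurwitz, this yields $\mathcal P^k-\mathcal P^{k+1}\succeq0$. For the lower bound, rewrite the ARE \eqref{mig_ARE} using $\mathcal A^{k}$:
\[
\mathcal A^{k\top}\mathcal P^\star+\mathcal P^\star\mathcal A^k+\mathcal Q+\mathcal K^{k\top}\tilde R\mathcal K^k=(\mathcal K^k-\mathcal K^\star)^\top\tilde R(\mathcal K^k-\mathcal K^\star).
\]
Subtracting this from \eqref{eq:lyap_pk} gives a Lyapunov equation for $\mathcal P^k-\mathcal P^\star$ with right-hand side $-(\mathcal K^k-\mathcal K^\star)^\top\tilde R(\cdot)\preceq0$. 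Hence $\mathcal P^k\succeq\mathcal P^\star$ for every $k\ge 0$, and in particular for $k+1$, which is item 2.

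\textbf{Step 3: convergence.} The sequence $\{\mathcal P^k\}$ is monotonically nonincreasing and bounded below by $\mathcal P^\star$. Each entry therefore converges (use $e_i^\top\mathcal P^k e_i$ and polarization), giving a limit $\bar{\mathcal P}\succeq\mathcal P^\star$. Then $\mathcal K^k\to\bar{\mathcal K}=\tilde R^{-1}\mathcal B^\top\bar{\mathcal P}$. Passing to the limit in \eqref{eq:lyap_pk} shows that $\bar{\mathcal P}$ solves the ARE \eqref{mig_ARE}.

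\textbf{Main obstacle.} The delicate point is identifying $\bar{\mathcal P}=\mathcal P^\star$, since the ARE may have other symmetric solutions and the limit of Hurwitz matrices need only be marginally stable. I would resolve this via uniqueness of the stabilizing solution. Because $\mathcal Q\succ0$, the limit equation $\bar{\mathcal A}^\top\bar{\mathcal P}+\bar{\mathcal P}\bar{\mathcal A}=-\mathcal Q-\bar{\mathcal K}^\top\tilde R\bar{\mathcal K}\prec0$ holds with $\bar{\mathcal P}\succ0$ (as $\bar{\mathcal P}\succeq\mathcal P^\star\succ0$). The eigenvector argument of Step 1 then shows that $\bar{\mathcal A}=\mathcal A-\mathcal B\bar{\mathcal K}$ is Hurwitz. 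It remains to compare two stabilizing solutions $\bar{\mathcal P}$ and $\mathcal P^\star$ of \eqref{mig_ARE}. Subtracting their AREs gives
\[
(\mathcal A-\mathcal B\mathcal K^\star)^\top\Delta+\Delta(\mathcal A-\mathcal B\bar{\mathcal K})=0,\qquad \Delta=\bar{\mathcal P}-\mathcal P^\star .
\]
This Sylvester equation involves two Hurwitz matrices, whose spectra satisfy $\lambda_i+\mu_j\neq0$, so its only solution is $\Delta=0$. Hence $\bar{\mathcal P}=\mathcal P^\star$ and $\bar{\mathcal K}=\mathcal K^\star$, which proves item 3.
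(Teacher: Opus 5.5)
Your proof is correct and complete. The paper gives no proof of its own and simply cites Kleinman (1968); your argument (the completed-square Lyapunov identity for stability, the two Lyapunov comparisons for monotonicity and for the lower bound $\mathcal{P}^\star\preceq\mathcal{P}^k$, then monotone convergence to a solution of \eqref{mig_ARE}) is essentially Kleinman's classical proof, except that you identify the limit through the Sylvester equation between two stabilizing solutions rather than by quoting uniqueness of the positive semidefinite ARE solution, which makes your version self-contained.
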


The method described in Lemma~\ref{kleinman} is in fact a policy iteration method~\cite{kleinman1968iterative} for continuous-time linear systems. 
Given a stabilizing gain matrix $\mathcal{K}^k$, (\ref{eq:lyap_pk}) is known as the step of \emph{policy evaluation}, since it evaluates the cost matrix $\mathcal{P}^k$ associated with the control policy. 
Equation~(\ref{eq:policy_update}), known as \emph{policy improvement}, finds a new feedback gain $\mathcal{K}^{k+1}$ based on the evaluated cost matrix $\mathcal{P}^k$.

It is clear that perfect knowledge of $\mathcal{A}$ and $\mathcal{B}$ is required in Kleinman’s algorithm, 
which makes this method inapplicable in our setting. Moreover, the unknown switching instants and the presence of unknown constant disturbance $\bar d_j$ further complicate the controller synthesis.

To eliminate the influence of $\bar d_j$
and enable a data-driven design, we introduce the incremental variables 
$\Delta \xi(\tau) = \xi(t) - \xi(t-\tau)$,
$\Delta u_1(\tau) = u_1(t) - u_1(t-\tau)$,
for $t \geq \tau$, $\tau>0$.
When the system operates in mode $j$, the resulting incremental dynamics are given by
\begin{equation}
\Delta\dot{\xi}(\tau)
= \mathcal{A}_j \Delta \xi(\tau)
\!+\! \mathcal{B}_j \Delta u_1(\tau)
= (\mathcal{A}_j - \mathcal{B}_j \mathcal{K}_j)\Delta \xi(\tau).
\end{equation}
This transformation allows the subsequent controller synthesis to be formulated as a model-free ADP problem.
Under sufficiently long dwell time, each mode satisfies $\omega_j \gg \tau$, so that the delay $\tau$ does not span consecutive switching intervals except for negligible transients.
Under the learned optimal feedback $\mathcal K_j^\star$, the closed-loop dynamics in mode $j$ are
$\dot{\xi}(t)
= (\mathcal A_j-\mathcal B_j \mathcal K_j^\star)\,\xi(t) + \bar d_j$, resulting in  $\Delta\dot{\xi}(\tau)
= (\mathcal{A}_j - \mathcal{B}_j \mathcal{K}_j^\star)\Delta \xi(\tau)$.
Since $\mathcal A_j-\mathcal B_j \mathcal K_j^\star$ is Hurwitz, we have 
$\Delta \xi(\tau)\to 0$ during mode $j$. 
This implies that $\xi(t)$ converges to a constant vector. 
Moreover, the closed-loop system admits a unique equilibrium 
$\xi_j^r=-A_{\mathrm{cl},j}^{-1}\bar d_j$, we conclude that 
$\xi(t)\to \xi_j^r$.

We then develop a periodic off-policy method to obtain the optimal feedback gain $\mathcal{K}^\star_j$ when the system matrices $(\mathcal A_j,\mathcal B_j)$ and the switching instants $\{t_j\}$ are unknown.

\vspace{-0.3cm}

\subsection{Off-policy mitigation method}

The main idea of the off-policy method is to apply an initial control policy to the system over a finite number of time intervals and collect online measurements. All subsequent iterations are then conducted by repeatedly using the same batch of collected data.

Here, we adopt a periodic off-policy learning scheme for the case where the switching instants are unknown. Specifically, the DM periodically activates the off-policy learning phase, during which data are collected over a finite time window and used to iteratively update the feedback gain. Once the learning phase is completed, the DM implements the learned control policy to respond to potential insider threats during the fixed inter-learning interval, until the next learning phase is triggered.



We first present the off-policy scheme for single-mode and omit the subscript $\sigma(t)$ for simplicity.
Consider an arbitrary feedback control policy $u=u^0_1$ with
\begin{align}\label{sys_off}
\dot{\xi}=\mathcal{A}^k \xi+\mathcal{B}(\mathcal{K}^k\xi+u)+\bar{d} \, ,
\end{align}
where $\mathcal{A}^k =\mathcal{A}-\mathcal{B}\mathcal{K}^k$.
Then we have
$\Delta\dot{ \xi}
= \mathcal{A}^k \Delta \xi+ \mathcal{B} (\mathcal{K}^k\Delta \xi+\Delta u)$,
with $\Delta e_0=e_0(t)-e_0(\tau)$.
Taking the time derivative of $\Delta \xi(\tau)^\top \mathcal{P}^k \Delta \xi(\tau)$
 along the solutions of (\ref{sys_off}), 
 \begin{align}\label{inte1}
&\Delta \xi^\top(\tau+\delta \tau)\, \mathcal{P}^k\, \Delta \xi(\tau+\delta \tau)
- \Delta \xi^\top(\tau)\, \mathcal{P}^k\, \Delta \xi(\tau) \\
&=\! - \!\!\int_\tau^{\tau+\delta \tau}\!\!\!\!\!\!\! \Delta \xi^\top \mathcal{Q}^k \Delta \xi \, dt
\!+\!2 \!\!\int_\tau^{\tau+\delta \tau}\!\!\! \!\!(\mathcal{K}^{k}\Delta \xi\!+\!\Delta u)^\top \tilde{R }_1 \mathcal{K}^{k+1} \Delta \xi \, dt,\notag
\end{align}
where $\mathcal{Q}^k=\mathcal{Q}+\mathcal{K}^{k\top}\tilde{R }_1 \mathcal{K}^k$.

Here, $\Delta \xi(\tau)$ in \eqref{inte1} is generated from system (\ref{sys_off}), in which $\mathcal{K}^k$
is not involved. Therefore, the same amount of data collected on the interval $[\tau, \tau+\delta t]$ can
be used for calculating  $\mathcal{K}^k$, with $k=1,2,\dots$. This implementation is well-known as
off-policy learning, in that the actual policy used can be an arbitrary one, as
long as it keeps the solutions of the overall system bounded ~\cite{jiang2012computational}.

To facilitate parameter estimation, the unknown matrices $\mathcal{P}_{j+1}$ and $\hat{\mathcal{K}}_{j+1}$ are vectorized using the Kronecker product representation. The following identity is used:
\begin{equation}\label{Kronecker}
\operatorname{vec}(X Y Z) = (Z^{\top} \otimes X)\operatorname{vec}(Y).
\end{equation}
Thus, we have the following equalities:
\begin{align*}
&\Delta \xi^\top \mathcal{Q}^k \Delta \xi  = (\Delta \xi^\top \otimes \Delta \xi^\top)\operatorname{vec}(\mathcal{Q}^k) \, , \\
&(\Delta u + \mathcal{K}^{k} \Delta \xi)^{\top} \tilde{R }_1 \mathcal{K}^{k+1} \Delta \xi\\
&
=\!
\Big[
(\Delta \xi^{\top} \!\!\otimes\! \Delta \xi^{\top}\!)(I_n \otimes\mathcal{K}^{k\top} \!\tilde{R }_1)
\!+ \!(\Delta \xi^{\top} \!\!\otimes\! \Delta u^{\!\top}\!)(I_n \!\otimes \!\tilde{R }_1\!)
\Big] \mathcal{K}^{k+1} \, . \notag
\end{align*}
Further, for any positive integer $p$, we define matrices
$\delta_{\xi\xi} \in \mathbb{R}^{p \times q^2}$,
$I_{\xi \xi} \in \mathbb{R}^{p \times q^2}$,
and $I_{\xi u} \in \mathbb{R}^{p \times qm}$, such that
\begin{align}
&\delta_{\xi\xi}
\!=\!
\Big[
\Delta \xi \otimes \Delta \xi \big|_{\tau_1}^{\tau_1+\delta \tau},
\ldots,
\Delta \xi \otimes \Delta \xi \big|_{\tau_p}^{\tau_p+\delta \tau}
\Big]^{\top} \notag \\
&I_{\xi \xi}
=
\Big[
\int_{\tau_1}^{\tau_1+\delta t}\!\!\! \Delta \xi \!\otimes\! \Delta \xi \, d\tau,
\;
\ldots,
\;
\int_{\tau_p}^{\tau_p+\delta t} \Delta \xi \!\otimes\! \Delta \xi\, dt
\Big]^{\top}\notag\\
&I_{\xi u}
=
\Big[
\int_{\tau_1}^{\tau_1+\delta t} \Delta \xi  \otimes u^0_1 \, dt,
\;
\ldots,
\;
\int_{\tau_p}^{\tau_p+\delta t} \Delta \xi \otimes \Delta u \, dt
\Big]^{\top}.\notag
\end{align}
For any given stabilizing gain matrix $\mathcal{K}^{k}$, (\ref{inte1}) implies 
\begin{equation}\label{matrix_off}
\tilde{\Theta}^k
\begin{bmatrix}
\mathrm{vec}(\mathcal{P}^k) \\
\mathrm{vec}(\mathcal{K}^{k+1})
\end{bmatrix}
=
\tilde{\Xi}^k \, ,
\end{equation}
where $\tilde{\Theta}^k \in \mathbb{R}^{p \times (q^2 + qm)}$
and $\tilde{\Xi}^k \in \mathbb{R}^{p}$ are defined as
\begin{align*}
\tilde{\Theta}^k
&=
\Big[
\delta_{\xi\xi},
\;
-2 I_{\xi \xi}(I_n \otimes \mathcal{K}^{k\top} \tilde{R}_1)
-
2 I_{\xi u}(I_n \otimes \tilde{R}_1)
\Big] \, , \\
\tilde{\Xi}^k
&=
- I_{\xi \xi}\,\mathrm{vec}(\mathcal{Q}^k) \, .
\end{align*}
From the above, an initial stabilizing control
policy $u^0_1$ is applied and the online information is recorded in matrices $\delta_{\xi\xi}$, $I_{\xi \xi}$, and $I_{\xi u}$. Then, without requiring additional system information, the matrices $\delta_{\xi\xi}$, $I_{\xi \xi}$, and $I_{\xi u}$ can be repeatedly used to construct the data matrices $\tilde{\Theta}^k$ and $\tilde{\Xi}^k$ for iterations.

The following assumption is in place to ensure the uniqueness of the solution to \eqref{matrix_off}.
\begin{assumption}\label{off_unique}
There exists an integer $l > 0$ such that
\begin{equation}\label{rank}
\operatorname{rank}([I_{\xi \xi}\;I_{\xi u}])=\frac{q(q+1)}{2}+mq \, .
\end{equation}
\end{assumption}

\begin{lemma}[\cite{jiang2012computational}]
    Under Assumption~\ref{off_unique}, there exists a unique pair of matrices $(\mathcal{P}^k, \mathcal{K}^{k+1})$, with $\mathcal{P}^k = \mathcal{P}^{k\top}$ such that \eqref{matrix_off} is satisfied. 
\end{lemma}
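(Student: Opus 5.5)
Existence follows from the data identity \eqref{inte1}, and uniqueness from the rank condition \eqref{rank} applied to the homogeneous version of \eqref{matrix_off}.

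\emph{Existence.} For a stabilizing $\mathcal{K}^k$, the Lyapunov equation \eqref{eq:lyap_pk} has a unique symmetric solution $\mathcal{P}^k$. Set $\mathcal{K}^{k+1}=\tilde R_1^{-1}\mathcal{B}^\top\mathcal{P}^k$. Differentiating $\Delta\xi^\top\mathcal{P}^k\Delta\xi$ along the incremental dynamics gives
\[
\frac{d}{dt}\big(\Delta\xi^\top\mathcal{P}^k\Delta\xi\big)
=\Delta\xi^\top\big(\mathcal{A}^{k\top}\mathcal{P}^k+\mathcal{P}^k\mathcal{A}^k\big)\Delta\xi
+2(\mathcal{K}^k\Delta\xi+\Delta u)^\top\mathcal{B}^\top\mathcal{P}^k\Delta\xi .
\]
Using \eqref{eq:lyap_pk} for the first term and $\mathcal{B}^\top\mathcal{P}^k=\tilde R_1\mathcal{K}^{k+1}$ for the second, then integrating over each window $[\tau_i,\tau_i+\delta\tau]$, yields \eqref{inte1}. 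Applying the Kronecker identity \eqref{Kronecker} turns these $p$ scalar equations into exactly \eqref{matrix_off}. So this pair solves \eqref{matrix_off}.

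\emph{Uniqueness.} Suppose two pairs with symmetric $\mathcal{P}$-components solve \eqref{matrix_off}, and let $(Y,Z)$, with $Y=Y^\top$, be their difference. Then
\[
\tilde\Theta^k\,[\operatorname{vec}(Y);\operatorname{vec}(Z)]=0 .
\]
I will show this forces $Y=0$ and $Z=0$. Because $Y$ is symmetric, $\operatorname{vec}(Y)$ has only $q(q+1)/2$ free coordinates, so the equation lives in a space of dimension $q(q+1)/2+mq$.

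The key step is to express the $\delta_{\xi\xi}$ columns through $I_{\xi\xi}$ and $I_{\xi u}$. By the fundamental theorem of calculus and the incremental dynamics,
\[
\Delta\xi^\top Y\Delta\xi\Big|_{\tau_i}^{\tau_i+\delta\tau}
=\int_{\tau_i}^{\tau_i+\delta\tau}\Big[\Delta\xi^\top\big(\mathcal{A}^{k\top}Y+Y\mathcal{A}^k\big)\Delta\xi+2(\mathcal{K}^k\Delta\xi+\Delta u)^\top\mathcal{B}^\top Y\Delta\xi\Big]dt .
\]
Substituting this into the homogeneous equation collects everything into the linear form
\[
I_{\xi\xi}\operatorname{vec}(M)+2I_{\xi u}\operatorname{vec}(N)=0,
\]
where
\[
M=\mathcal{A}^{k\top}Y+Y\mathcal{A}^k+\mathcal{K}^{k\top}(\mathcal{B}^\top Y-\tilde R_1Z)+(\mathcal{B}^\top Y-\tilde R_1Z)^\top\mathcal{K}^k
\]
is symmetric and $N=\mathcal{B}^\top Y-\tilde R_1Z$. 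The symmetry of $M$ is what makes the rank count work. Assumption~\ref{off_unique} says the columns of $[I_{\xi\xi}\;I_{\xi u}]$, restricted to the symmetric part, are linearly independent, so $M=0$ and $N=0$.

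From $N=0$ we get $Z=\tilde R_1^{-1}\mathcal{B}^\top Y$. Then $M=0$ reduces to the Lyapunov equation
\[
\mathcal{A}^{k\top}Y+Y\mathcal{A}^k=0 .
\]
Since $\mathcal{A}^k$ is Hurwitz by Lemma~\ref{kleinman}, this equation has only the trivial solution, so $Y=0$ and hence $Z=0$.

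The main obstacle is this bookkeeping with symmetric matrices. The identity must be written with symmetrized coefficients, the column count must be matched to the rank $q(q+1)/2+mq$ after identifying the duplicate entries of $\Delta\xi\otimes\Delta\xi$, and the $\Delta u$ terms must be routed into $I_{\xi u}$ consistently with the paper's definitions.
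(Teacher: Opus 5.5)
Your argument is correct and is essentially the proof in the cited reference \cite{jiang2012computational}; the paper states this lemma without reproducing a proof. As there, existence comes from the Kleinman pair satisfying the data identity \eqref{inte1}. Uniqueness comes from rewriting the homogeneous system as $I_{\xi\xi}\operatorname{vec}(M)+2I_{\xi u}\operatorname{vec}(N)=0$ with $M$ symmetric, using the rank condition \eqref{rank} to get $M=N=0$, and concluding $Y=0$, $Z=0$ from $\mathcal{A}^{k\top}Y+Y\mathcal{A}^k=0$ with $\mathcal{A}^k$ Hurwitz. One small point of attribution: Hurwitzness of $\mathcal{A}^k$ is the standing hypothesis that $\mathcal{K}^k$ is stabilizing. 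Lemma~\ref{kleinman} only propagates that hypothesis along the iteration; it does not supply it for an arbitrary $\mathcal{K}^k$.
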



{On this basis, 
we develop a periodic off-policy learning scheme.
Since switching may cause the collected data to include samples generated under different system dynamics, we restrict the learning duration such that each data segment spans at most two consecutive modes. By appropriately selecting the inter-learning interval, we ensure that at least one off-policy update is performed within each dwell-time interval. The controller obtained from mixed-mode data is used to initialize the controller for the currently active mode, and is subsequently refined through further learning toward an approximate optimal policy. Quantitative conditions relating the dwell time, learning duration, and inter-learning interval will be provided later.}





Define the feedback gain computed from the mixed-mode data based on modes $j$ and $j+1$
as $\hat{\mathcal K}_{j+1}$. The following lemma ensures that the unique solution of \eqref{matrix_off} corresponds  to the existence of an pair $(\hat{\mathcal{A}}_{j+1},\hat{\mathcal{B}}_{j+1})$.
\begin{lemma}
Under Assumption~\ref{off_unique},
if $(\hat{\mathcal P}_j,\hat{ \mathcal K}_{j+1})$ is the unique solution recovered from \eqref{matrix_off}, then there exists a pair $(\hat{\mathcal{A}}_{j+1},\hat{\mathcal{B}}_{j+1})$ such that $(\hat{\mathcal{P}}_j,\hat{ \mathcal K}_{j+1})$ satisfies
\begin{align}
&(\hat{\mathcal{A}}_{j+1}-\hat{\mathcal{B}}_{j+1} \hat{ \mathcal K}_{j})^\top \hat{\mathcal{P}}_j+\hat{\mathcal{P}}_j(\hat{\mathcal{A}}_{j+1}-\hat{\mathcal{B}}_{j+1} \hat{ \mathcal K}_{j})+\hat{\mathcal{Q}}_j=0,\notag\\
&\hat{ \mathcal K}_{j+1} = \tilde{R}_1^{-1}\hat{\mathcal{B}}_{j+1}^\top \hat{\mathcal P}_j, \label{eq:alg7}
\end{align}
where $ \hat{\mathcal{Q}}_j= \mathcal{Q}+ \hat{ \mathcal K}_{j}^\top\tilde{R}_1\hat{ \mathcal K}_{j}$.
\end{lemma}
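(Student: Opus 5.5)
Since $\tilde R_1$ is invertible, the plan is to build the pair $(\hat{\mathcal A}_{j+1},\hat{\mathcal B}_{j+1})$ explicitly, taking the solution of \eqref{matrix_off} as given. First I would choose $\hat{\mathcal B}_{j+1}$ so that the policy-improvement relation in \eqref{eq:alg7} holds. That relation is $\hat{\mathcal B}_{j+1}^\top\hat{\mathcal P}_j=\tilde R_1\hat{\mathcal K}_{j+1}$, i.e. $\hat{\mathcal P}_j\hat{\mathcal B}_{j+1}=\hat{\mathcal K}_{j+1}^\top\tilde R_1$. If $\hat{\mathcal P}_j$ is invertible, the natural choice is
\[
\hat{\mathcal B}_{j+1}:=\hat{\mathcal P}_j^{-1}\hat{\mathcal K}_{j+1}^\top\tilde R_1 .
\]
The remaining task is then to find $\hat{\mathcal A}_{j+1}$ that satisfies the Lyapunov-type equation. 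Set $M:=\hat{\mathcal A}_{j+1}-\hat{\mathcal B}_{j+1}\hat{\mathcal K}_j$. We need $M^\top\hat{\mathcal P}_j+\hat{\mathcal P}_jM=-\hat{\mathcal Q}_j$. With $\hat{\mathcal P}_j$ symmetric and invertible, one solution is
\[
M=-\tfrac12\hat{\mathcal P}_j^{-1}\hat{\mathcal Q}_j .
\]
Indeed $\hat{\mathcal P}_jM=-\tfrac12\hat{\mathcal Q}_j$ is symmetric, so the two terms add to $-\hat{\mathcal Q}_j$. Any $M$ whose product $\hat{\mathcal P}_jM$ equals $-\tfrac12\hat{\mathcal Q}_j$ plus a skew-symmetric matrix also works. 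We then define $\hat{\mathcal A}_{j+1}:=M+\hat{\mathcal B}_{j+1}\hat{\mathcal K}_j$, and both equations in \eqref{eq:alg7} hold by construction.

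The main obstacle is that this construction needs $\hat{\mathcal P}_j$ to be invertible, and ideally positive definite so the interpretation as a stabilizing model is meaningful. The unique-solution lemma preceding the statement gives existence and uniqueness but says nothing about definiteness. I would argue this from the data identity \eqref{inte1} in its mixed-mode form. On each data segment the increments satisfy $\Delta\dot\xi=\mathcal A_i\Delta\xi+\mathcal B_i\Delta u$ with $i\in\{j,j+1\}$. Using the rank condition \eqref{rank}, the unique solution of \eqref{matrix_off} equals a weighted least-squares combination of the single-mode equations. The weights are the rows of $[I_{\xi\xi}\;I_{\xi u}]$ coming from each mode.

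Positive definiteness would then follow from writing $\hat{\mathcal P}_j$ as an integral of the positive definite quadratic $\Delta\xi^\top\hat{\mathcal Q}_j\Delta\xi$ along trajectories. This is where I expect genuine difficulty with mixed data. If it does not go through cleanly, the fallback is as follows. Whenever the quadratic form of $\hat{\mathcal P}_j$ is nonsingular, the construction above applies directly. Nonsingularity should be generic: it holds, for instance, when the data come mostly from one mode, in which case $\hat{\mathcal P}_j$ is close to the true Lyapunov solution of that mode, which is positive definite. So I would state invertibility of $\hat{\mathcal P}_j$ as the key condition and verify it via this perturbation argument from the single-mode case.

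Alternatively, to avoid inverting $\hat{\mathcal P}_j$, I would decompose into the range and kernel of $\hat{\mathcal P}_j$. On the kernel, the symmetric equation forces $\hat{\mathcal Q}_j$ restricted to the kernel to vanish. Since $\hat{\mathcal Q}_j\succ 0$ this is a contradiction, so the kernel is trivial and $\hat{\mathcal P}_j$ is in fact forced to be nonsingular once any $M$ exists. This observation makes the construction self-consistent and is the line I would try first.
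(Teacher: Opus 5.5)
Your construction is exactly the paper's: $\hat{\mathcal B}_{j+1}=\hat{\mathcal P}_j^{-1}\hat{\mathcal K}_{j+1}^\top\tilde R_1$ and $\hat{\mathcal A}_{j+1}=-\tfrac12\hat{\mathcal P}_j^{-1}\hat{\mathcal Q}_j+S+\hat{\mathcal B}_{j+1}\hat{\mathcal K}_j$, with $\hat{\mathcal P}_jS$ skew-symmetric. Once $\hat{\mathcal P}_j$ is symmetric and invertible, the verification is pure algebra. The gap is the invertibility step, and the line you say you would try first does not close it.

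The kernel argument proves only this: \emph{if} some $M$ with $M^\top\hat{\mathcal P}_j+\hat{\mathcal P}_jM+\hat{\mathcal Q}_j=0$ exists, then $\hat{\mathcal P}_j$ is nonsingular. But producing such an $M$ is the claim itself, and your construction of it uses $\hat{\mathcal P}_j^{-1}$, so the argument is circular. Combined with the construction, it shows that the lemma's conclusion is \emph{equivalent} to nonsingularity of $\hat{\mathcal P}_j$. That is a useful observation, but not a proof of nonsingularity. The integral representation you considered has the same defect, since $\hat{\mathcal P}_j=\int_0^\infty e^{M^\top t}\hat{\mathcal Q}_je^{Mt}\,dt$ presupposes a Hurwitz $M$ solving the equation.

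In the single-mode case, the missing $M$ is supplied by the true closed loop $\mathcal A-\mathcal B\mathcal K^k$, because uniqueness identifies the data solution with the Kleinman iterate. With mixed-mode rows, no single model generates \eqref{matrix_off}. The system is generally inconsistent, and its unique solution is the least-squares one, a matrix-weighted combination of the two single-mode solutions. Assumption~\ref{off_unique} is only a rank condition on the regressor and says nothing about invertibility of the $\hat{\mathcal P}$-block of that combination. Already in a scalar instance of \eqref{matrix_off} with one row from each mode, Cramer's rule gives $\hat{\mathcal P}_j$ as a ratio whose numerator can vanish while the determinant does not. In that case no pair $(\hat{\mathcal A}_{j+1},\hat{\mathcal B}_{j+1})$ exists, since $\hat{\mathcal Q}_j\succ0$.

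The paper's proof has the same hole. It disposes of invertibility with ``$\hat{\mathcal Q}_j\succ0$ implies $\hat{\mathcal P}_j$ nonsingular,'' which is your kernel argument and presupposes the Lyapunov equation it is about to construct. So you are not missing an idea the paper has; the statement needs an additional hypothesis. Your fallback is the correct repair. Either assume $\hat{\mathcal P}_j$ nonsingular outright, or impose a quantitative condition that makes $\hat{\mathcal P}_j$ a small perturbation of a single-mode Kleinman iterate. Such a condition would be few mixed-mode rows, or modes close to each other, measured relative to the conditioning of $\tilde\Theta^k$. That iterate is positive definite whenever the current gain stabilizes the mode in question. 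Positive definiteness is more than the lemma needs, but it is what the subsequent stability analysis wants: with $\hat{\mathcal P}_j\succ0$, Lyapunov's theorem makes any $M$ solving the equation Hurwitz, so the constructed pair is a genuinely stabilized model.
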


\noindent{\textbf{Proof}.
Let
$(\hat{\mathcal{P}}_j,\hat{ \mathcal K}_{j+1})$ be a solution of \eqref{matrix_off}. Then under Assumption~\ref{off_unique}, $(\hat{\mathcal{P}}_j,\hat{ \mathcal K}_{j+1})$ are unique. 
Since $\hat{\mathcal{Q}}_j\succ0$ implies  $\hat{\mathcal{P}}_j$ nonsingular, we have
$\hat{\mathcal{B}}_{j+1}^\top \hat{\mathcal{P}}_j
=
(\tilde{R}_1  \hat{\mathcal{K}}_{j+1}\hat{\mathcal{P}}_j^{-1\top})\hat{\mathcal{P}}_j
=
\tilde{R}_1 \hat{\mathcal{K}}_{j+1}$.
Therefore, $\hat{\mathcal{B}}_{j+1}=\hat{\mathcal{P}}_j^{-1}\hat{\mathcal{K}}_{j+1}^\top \tilde{R}_1$.
Moreover, 
choose any matrix $S$ satisfying $S^\top \hat{\mathcal{P}}_j+\hat{\mathcal{P}}_j S=0$ (e.g., $S=0$), and define
$\hat{\mathcal{A}}_{j+1} := -\tfrac12 \hat{\mathcal{P}}_j^{-1}\hat{\mathcal{Q}}_j + S + \hat{\mathcal{B}}_{j+1} \hat{\mathcal{K}}_j
$. Then $\hat{\mathcal{A}}_{j+1}-\hat{\mathcal{B}}_{j+1} \hat{\mathcal{K}}_j = -\tfrac12 \hat{\mathcal{P}}_j^{-1}\hat{\mathcal{Q}}_j + S$ and
\begin{align*}
&(\hat{\mathcal{A}}_{j+1}-\hat{\mathcal{B}}_{j+1} \hat{\mathcal{K}}_j)^\top \hat{\mathcal{P}}_j+\hat{\mathcal{P}}_j(\hat{\mathcal{A}}_{j+1}-\hat{\mathcal{B}}_{j+1} \hat{\mathcal{K}}_j)
=\\
&\Big(-\tfrac12 \hat{\mathcal{Q}}_j\hat{\mathcal{P}}_j^{-1}+S^\top\Big)\hat{\mathcal{P}}_j
+
\hat{\mathcal{P}}_j\Big(-\tfrac12 \hat{\mathcal{P}}_j^{-1}\hat{\mathcal{Q}}_j+S\Big)\\
&
=
-\hat{\mathcal{Q}}_j + (S^\top \hat{\mathcal{P}}_j+\hat{\mathcal{P}}_j S)
=
-\hat{\mathcal{Q}}_j. 
\end{align*}
The proof is complete. \qeds 
}

On the other hand,
in single-mode learning process, a stabilizing feedback gain $\mathcal{K}^0$ is required as an a priori policy for the DM. Similarly, in the periodic framework, each learning phase requires an initial policy that stabilizes the currently active system.
However, under unknown switching, an initial stabilizing feedback gain $\mathcal{K}_{j+1}^0$ for mode $j+1$ may not be available a priori. 
The following assumption ensures that the optimal gain $\mathcal{K}_j^\star$ learned from clear mode-$j$ data can serve as a stabilizing initial gain $\hat{\mathcal{K}}^{0}_{j+1}$ for the learning via potentially mixed-mode data. The resulting gain $\hat{\mathcal{K}}^{\star}_{j+1}$ can, in turn, be used as an initial stabilizing one for the learning on mode $j+1$ when clear data becomes available.  The periodical mitigation scheme is shown in Fig.~\ref{fig11}.

\vspace{-0.25cm}

\begin{figure}[h]
	\centering	
			\includegraphics[width=\columnwidth]{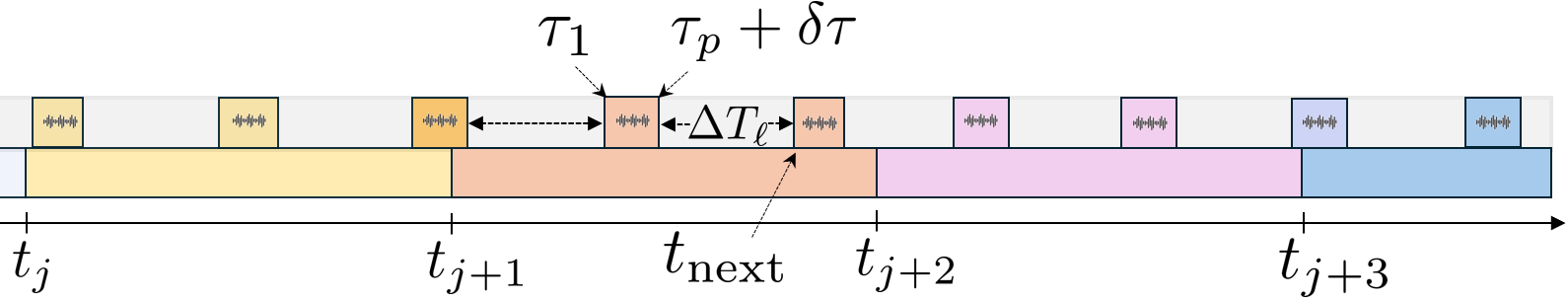}
\\\vspace{0.4cm}
\includegraphics[width=1\columnwidth]{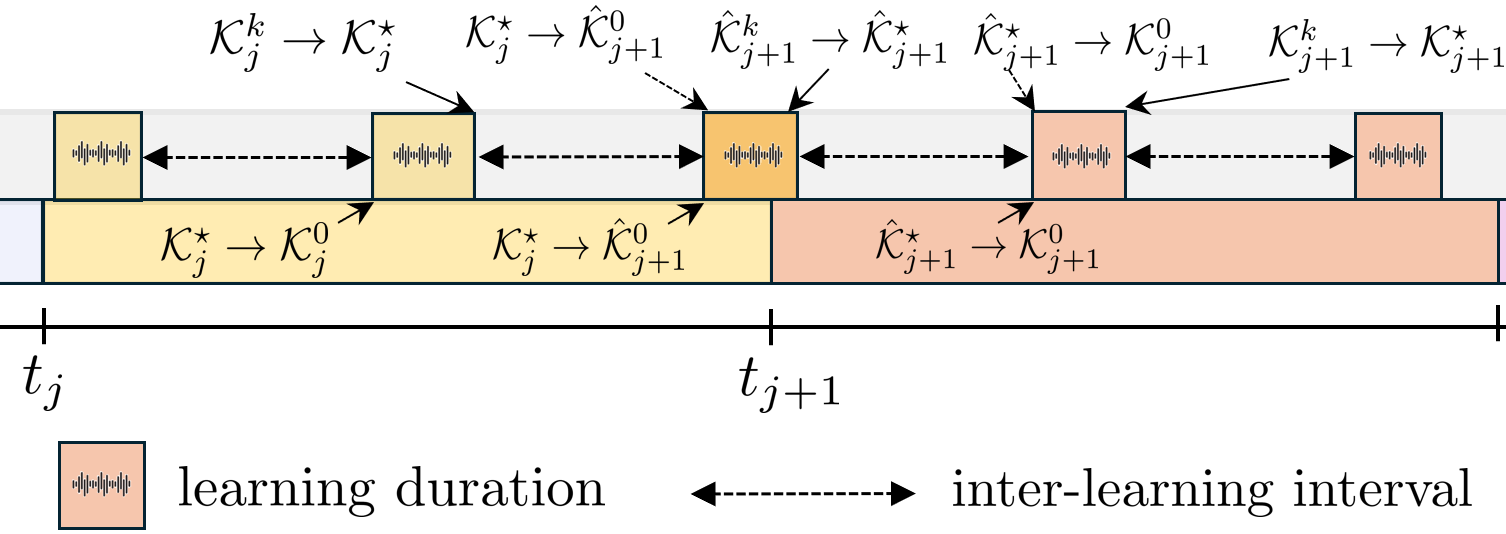}
\\\vspace{-0.1cm}
\caption{Periodical mitigation scheme} 
\vspace{-0.45cm}
	\label{fig11}
\end{figure}



\begin{assumption}\label{bound}
There exists $\Delta_r \ge 0$ such that for all $j,j+1\in\{0,\dots,J\}$,
$\|\xi_j^r-\xi_{j+1}^r\| \le \Delta_r $.
\end{assumption}

\begin{assumption}\label{ass:mu2_margin}
$~$
\begin{enumerate}[i)]
    \item There exists a known constant $\rho_j \ge 0$ such that
\begin{equation}
\left\|
(\mathcal{A}_{j+1}-\mathcal{A}_j) - (\mathcal{B}_{j+1}-\mathcal{B}_j)K_j^\star
\right\|_2
\le \rho_j .
\end{equation}
\item Fix the Euclidean norm $\|\cdot\|_2$ and its induced logarithmic norm
$\mu_2(M) := \lambda_{\max}\!\left(\frac{M+M^\top}{2}\right)$.
For each mode $j\in\mathcal M$, there exists a constant $\gamma_j>0$ such that the optimal closed-loop matrix satisfies
$\mu_2\!\left(\mathcal A_j-\mathcal B_j\mathcal K_j^\star\right)\le -\gamma_j$.
\item The input matrix in mode $j+1$ satisfies
$\|\mathcal B_{j+1}\|_2 \le \bar B $,
for some known constant $\bar B>0$.
\end{enumerate}

\end{assumption}




Moreover, the following lemma shows that the gain $\mathcal{K}_{j}^\star$ learned from clear-mode data provides a valid initialization for learning with mixed-mode data, and that the resulting gain $\hat{\mathcal{K}}_{j+1}^\star$ serves as a valid initialization for the subsequent learning phase in mode $j+1$.
\begin{lemma} \label{lem:mu2_hurwitz}
Suppose Assumption~\ref{ass:mu2_margin} holds. 
\begin{enumerate}
    \item If the  pair $(\hat{\mathcal A}_{j+1},\hat{\mathcal B}_{j+1})$
satisfies
$\big\|(\hat{\mathcal A}_{j+1}-\mathcal A_j)-(\hat{\mathcal B}_{j+1}-\mathcal B_j)\mathcal K_j^\star\big\|_2 < \gamma_j $,
then the mixed-mode closed-loop matrix
$
\hat{\mathcal A}_{j+1}-\hat{\mathcal B}_{j+1}\mathcal K_{j}^\star
$
is Hurwitz.
\item If
$\|\hat K_{j+1}-K_j^\star\|_2
<
\frac{\gamma_j-\rho_j}{\bar B}$,
$\rho_j<\gamma_j$,
the closed-loop matrix
$\mathcal A_{j+1}-\mathcal B_{j+1}\hat{\mathcal K}_{j+1}$
is Hurwitz.
\end{enumerate}
\end{lemma}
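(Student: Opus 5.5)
Both parts follow from one perturbation fact about the logarithmic norm: for any square matrices $M,\Delta$,
\[
\mu_2(M+\Delta)\le \mu_2(M)+\|\Delta\|_2 ,
\]
and $\mu_2(N)<0$ implies $N$ is Hurwitz, as recalled in the Notation paragraph. The subadditivity comes directly from $\mu_2(M)=\lambda_{\max}((M+M^\top)/2)$: by Weyl's inequality, $\lambda_{\max}$ of a sum of symmetric matrices is at most the sum of the $\lambda_{\max}$'s, and $\lambda_{\max}((\Delta+\Delta^\top)/2)\le\|\Delta\|_2$. In each part I will write the target closed-loop matrix as the nominal optimal closed-loop matrix $\mathcal A_j-\mathcal B_j\mathcal K_j^\star$ plus a perturbation. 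Assumption~\ref{ass:mu2_margin}\,ii) gives $\mu_2(\mathcal A_j-\mathcal B_j\mathcal K_j^\star)\le-\gamma_j$, so it will suffice to show the perturbation has spectral norm strictly less than $\gamma_j$.

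\emph{Part 1.} Write
\[
\hat{\mathcal A}_{j+1}-\hat{\mathcal B}_{j+1}\mathcal K_j^\star=(\mathcal A_j-\mathcal B_j\mathcal K_j^\star)+\big[(\hat{\mathcal A}_{j+1}-\mathcal A_j)-(\hat{\mathcal B}_{j+1}-\mathcal B_j)\mathcal K_j^\star\big].
\]
The bracketed term has norm $<\gamma_j$ by hypothesis. Hence $\mu_2(\hat{\mathcal A}_{j+1}-\hat{\mathcal B}_{j+1}\mathcal K_j^\star)<-\gamma_j+\gamma_j=0$, so the matrix is Hurwitz.

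\emph{Part 2.} Write
\[
\mathcal A_{j+1}-\mathcal B_{j+1}\hat{\mathcal K}_{j+1}=(\mathcal A_j-\mathcal B_j\mathcal K_j^\star)+\big[(\mathcal A_{j+1}-\mathcal A_j)-(\mathcal B_{j+1}-\mathcal B_j)\mathcal K_j^\star\big]-\mathcal B_{j+1}(\hat{\mathcal K}_{j+1}-\mathcal K_j^\star).
\]
This identity is checked by expanding: the $\mathcal B_j\mathcal K_j^\star$ terms cancel, and $-\mathcal B_{j+1}\mathcal K_j^\star-\mathcal B_{j+1}(\hat{\mathcal K}_{j+1}-\mathcal K_j^\star)=-\mathcal B_{j+1}\hat{\mathcal K}_{j+1}$. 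The first bracket has norm at most $\rho_j$ by Assumption~\ref{ass:mu2_margin}\,i). The last term has norm at most $\bar B\|\hat{\mathcal K}_{j+1}-\mathcal K_j^\star\|_2<\gamma_j-\rho_j$ by Assumption~\ref{ass:mu2_margin}\,iii) and the hypothesis. Applying subadditivity gives $\mu_2<-\gamma_j+\rho_j+(\gamma_j-\rho_j)=0$, so the matrix is Hurwitz. The condition $\rho_j<\gamma_j$ is what makes the admissible gain-error radius positive.

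The only step that needs care is the Hurwitz criterion $\mu_2(N)<0$. If one does not simply quote the Notation paragraph, it follows from a Lyapunov argument with $V=\|x\|^2$: along $\dot x=Nx$ we have $\dot V=x^\top(N+N^\top)x\le 2\mu_2(N)V$, so $V$ decays exponentially. Alternatively, if $Nv=\lambda v$ with $\|v\|=1$, then $\mathrm{Re}\,\lambda=\mathrm{Re}\big(v^*\tfrac{N+N^\top}{2}v\big)\le\mu_2(N)$. The notation is also slightly inconsistent: the statement writes $K_j^\star$ and $\hat K_{j+1}$ where it means $\mathcal K_j^\star$ and $\hat{\mathcal K}_{j+1}$, and I will treat these as the same objects.
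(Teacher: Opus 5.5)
Your proof is correct and takes the same approach as the paper. In both parts you write the target closed-loop matrix as $\mathcal A_j-\mathcal B_j\mathcal K_j^\star$ plus the same perturbation terms the paper uses ($\Delta_j$ in part 1, and $E_{j\to j+1}-\mathcal B_{j+1}\Delta K_{j+1}$ in part 2), then apply $\mu_2(X+Y)\le\mu_2(X)+\|Y\|_2$ with Assumption~\ref{ass:mu2_margin}. Your justifications of subadditivity via Weyl's inequality and of $\mu_2(N)<0\Rightarrow N$ Hurwitz are valid additions; the paper simply cites the first and takes the second from its Notation paragraph.
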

\begin{proof}
(i) Let
$\Delta_j := (\hat{\mathcal A}_{j+1}-\mathcal A_j)-(\hat{\mathcal B}_{j+1}-\mathcal B_j)\mathcal K_j^\star$, 
$\hat{\mathcal A}_{j+1}-\hat{\mathcal B}_{j+1}\mathcal K_j^\star
=
(\mathcal A_j-\mathcal B_j\mathcal K_j^\star)+\Delta_j$.
By the subadditivity of the logarithmic norm,
\[
\mu_2(\hat{\mathcal A}_{j+1}\!-\!\hat{\mathcal B}_{j+1}\mathcal K_j^\star)
\!\le\!
\mu_2(\mathcal A_j-\mathcal B_j\mathcal K_j^\star)
\!+\!\|\Delta_j\|_2
\!\le\!
-\gamma_j\!+\!\|\Delta_j\|_2.
\]
Thus, 
$\mu_2(\!\hat{\mathcal A}_{j+1}\!\!-\!\hat{\mathcal B}_{j+1}\mathcal K_j^\star)\!<\!0$, and
$\hat{\mathcal A}_{j+1}\!\!-\!\hat{\mathcal B}_{j+1}\mathcal K_j^\star$ is Hurwitz.

(ii)
Fix any $j\in\mathcal M$.
Define the previous closed-loop matrix
$A_\star := \mathcal A_j-\mathcal B_j\mathcal K_j^\star$,
and the candidate closed-loop matrix after switching to mode $j\!+\!1$ as
$A_{\hat K} := \mathcal A_{j+1}-\mathcal B_{j+1}\hat{\mathcal K}_{j+1}$.
Introduce the switching-induced perturbation (with respect to $\mathcal K_j^\star$)
$E_{j\to j+1}
:=
(\mathcal A_{j+1}-\mathcal A_j) - (\mathcal B_{j+1}-\mathcal B_j)\mathcal K_j^\star$,
and the controller mismatch
$\Delta K_{j+1} := \hat{\mathcal K}_{j+1}-\mathcal K_j^\star$.
Then we have
\begin{equation}\label{eq:Acl_decomp}
A_{\hat K}
=
A_\star
+
E_{j\to j+1}
-
\mathcal B_{j+1}\Delta K_{j+1}.
\end{equation}

By the definition of 
$\mu_2(X)$,
and its subadditivity property 
$\mu_2(X+Y)\le \mu_2(X)+\|Y\|_2$~\cite{hu2004weighted},
\[
\mu_2(A_{\hat K})
\le
\mu_2(A_\star)
+
\|E_{j\to j+1}\|_2
+
\|\mathcal B_{j+1}\Delta K_{j+1}\|_2.
\]
Under Assumption~\ref{ass:mu2_margin}, we have
$\mu_2(A_\star)\le -\gamma_j$ and $\|E_{j\to j+1}\|_2\le \rho_j$, and
$\|\mathcal B_{j+1}\|_2\le \bar B$. Hence,
\[
\|\mathcal B_{j+1}\Delta K_{j+1}\|_2
\le
\|\mathcal B_{j+1}\|_2\,\|\Delta K_{j+1}\|_2
\le
\bar B\,\|\hat{\mathcal K}_{j+1}-\mathcal K_j^\star\|_2.
\]
Therefore,
$\mu_2(A_{\hat K})
\le
-\gamma_j+\rho_j+\bar B\,\|\hat{\mathcal K}_{j+1}-\mathcal K_j^\star\|_2$.
If
$\|\hat{\mathcal K}_{j+1}-\mathcal K_j^\star\|_2
<
\frac{\gamma_j-\rho_j}{\bar B}$, 
$ \rho_j<\gamma_j$,
then $\mu_2(A_{\hat K})<0$.
\end{proof}

\medskip


We are now ready to present the periodic off-policy mitigation Algorithm~1. For notational simplicity, we omit the index $j$ and use the iteratively updated gains $\mathcal{K}^k$ and $\mathcal{K}^0$ to represent $\mathcal{K}_j^\star$ and $\hat{\mathcal{K}}_{j+1}^{0}$, respectively. The time interval $\tau_{p}+\delta \tau-\tau_1$ denotes the learning duration, which represents the minimum observation time required for the DM to infer the insider’s current behavior, and the time interval $\Delta T_{\ell}$ denotes the inter-learning interval. The off-policy learning procedure is activated periodically in a time-triggered manner, determined by $\tau_{p}+\delta \tau-\tau_1$ and $\Delta T_{\ell}$, where $t_{\operatorname{next}}$ denotes the starting time of the next learning phase.

\begin{algorithm}[h]
	\caption{Off-policy mitigation algorithm}
	\label{off-alg}
	\begin{algorithmic}[1]
  \renewcommand{\algorithmicrequire}{
  \textbf{Initialize:}}
		\REQUIRE Set $\Delta T_{\ell}$ and  $\mathcal{K}^0$ such that $\mathcal{A}_1-\mathcal{B}_1\mathcal{K}^0$ is Hurwitz.  Let  $q=0$, $j=0$, $t_{\operatorname{next}}=\tau$.
        \IF{$t\geq t_{\operatorname{next}}$}
 \STATE 
  {\textbf{Online data collection:}}  
		 Apply $u=-\mathcal{K}^0\xi+e_0$ to the system from $\tau_{1}=t$. Compute $\delta_{\xi\xi},I_{\xi \xi}$, and $I_{\xi u}$ until the rank condition \eqref{rank}  is satisfied. Let $k=0$.
        \WHILE{$\|\mathcal{P}^k-\mathcal{P}^{k-1}\|\geq \epsilon$}
		\STATE \textbf{Policy evaluation and improvement:}  Solve $\mathcal{P}^k$ and $\mathcal{K}^{k+1}$
     from \eqref{matrix_off}
     \STATE $k=k+1$;  
     \ENDWHILE

     \textbf{Post-learning implementation}: 
    Apply $u=-\mathcal{K}^{k}\xi$ as the control during $\Delta T_{\ell}$

    $t_{\operatorname{next}}\gets t_{\operatorname{next}}+\tau_{p}+\delta \tau-\tau_1+\Delta T_{\ell}$    

     $\mathcal{K}^0\gets\mathcal{K}^{k}$
    \ENDIF
	\end{algorithmic}
\end{algorithm}
\vspace{-0.35cm}

\subsection{Convergence analysis}
{Next, we investigate the convergence of Algorithm~1.
For each mode $\sigma_j$ with $j\in\{0,1,\ldots,J\}$, 
denote $\omega_{\min}:=\min_{j}\omega_j$ as the minimum dwell time
and  $\zeta_{\max}:=\max_j \tau_{p,j}+\delta\tau-\tau_{1,j}$ as the maximum learning duration required to accumulate sufficient online data to satisfy the rank condition \eqref{rank}. Define $\gamma_{\min}:=\min_{j}\gamma_j$. 
The following result can be established. }

\begin{mythm}\label{thm:dwell_stability}
Suppose that Assumptions~\ref{off_unique}--\ref{ass:mu2_margin} hold.
{ If the inter-learning interval $\Delta T_{\ell}$ satisfies $\Delta T_{\ell}<\omega_{\min}-\frac{\ln(2\nu)}{\gamma_{\min}}-2\zeta_{\max}$ with $\nu\geq1$}, then the sequence ${\mathcal{K}^k}$ generated during the learning phase for mode $\sigma_j$ converges to the optimal gain $\mathcal{K}^{\star}_j$ associated with $\mathcal{P}^{\star}_j$, for $j=0,\dots,J$.
Moreover, there exist constants $c_1>0$, $c_2\ge 0$, and $\alpha>0$ such that the closed-loop trajectory satisfies
\[
\|\xi(t)-\xi_{\sigma(t)}^r\|
\le
c_1 e^{-\alpha t}\|\xi(0)-\xi_{\sigma(0)}^r\| + c_2,
\qquad \forall t\ge 0 .
\]

\end{mythm}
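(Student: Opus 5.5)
The argument has two parts. The first is a timing argument: it shows that every dwell interval contains at least one learning phase built entirely from single-mode data, with enough time left over for the transient to decay. The second is a per-mode convergence argument that reuses Lemma~\ref{kleinman} and Lemma~\ref{lem:mu2_hurwitz}. Together they give a piecewise exponential bound, which is then chained across the finitely many switches.

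\emph{Timing.} Learning phases start at times spaced by at most $\zeta_{\max}+\Delta T_\ell$. Fix a dwell interval $[t_j,t_{j+1})$, whose length is at least $\omega_{\min}$. A learning window may straddle $t_j$; this is the mixed-mode case, and it uses at most $\zeta_{\max}$ of the interval. The next learning window starts no later than $t_j+\zeta_{\max}+\Delta T_\ell$ and ends by $t_j+2\zeta_{\max}+\Delta T_\ell$. The hypothesis $\Delta T_\ell<\omega_{\min}-\ln(2\nu)/\gamma_{\min}-2\zeta_{\max}$ then shows that this second window, which uses clean mode-$j$ data, lies inside the interval. It also leaves a residual time of at least $\ln(2\nu)/\gamma_{\min}$ during which the optimal gain $\mathcal K_j^\star$ is applied. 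Since the learning duration is below the dwell time, each data segment spans at most two modes, as the paper requires.

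\emph{Learning convergence, by induction on $j$.} Assume the gain entering interval $j$ is $\mathcal K_{j-1}^\star$, or $\mathcal K^0$ when $j=0$.
\begin{itemize}
\item For the mixed-mode window, the lemma stated just before Assumption~\ref{bound} supplies an equivalent pair $(\hat{\mathcal A}_j,\hat{\mathcal B}_j)$. Lemma~\ref{lem:mu2_hurwitz}(i) makes $\mathcal K_{j-1}^\star$ stabilizing for this pair. Because the off-policy equation \eqref{matrix_off} is algebraically identical to Kleinman's iteration for $(\hat{\mathcal A}_j,\hat{\mathcal B}_j)$, Lemma~\ref{kleinman} yields a limit $\hat{\mathcal K}_j$.
\item Lemma~\ref{lem:mu2_hurwitz}(ii) then makes $\hat{\mathcal K}_j$ stabilizing for the true mode-$j$ pair. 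This needs the gap condition on $\|\hat{\mathcal K}_j-\mathcal K_{j-1}^\star\|$; I would either treat that condition as implicit or argue it from the closeness of the mixed data to single-mode data.
\item Hence the clean window starts from a stabilizing gain. Under Assumption~\ref{off_unique}, \eqref{matrix_off} reproduces exactly Kleinman's iterates \eqref{eq:lyap_pk}--\eqref{eq:policy_update} for $(\mathcal A_j,\mathcal B_j)$. Lemma~\ref{kleinman}(iii) gives $\mathcal K^k\to\mathcal K_j^\star$, which closes the induction.
\end{itemize}

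\emph{Trajectory bound.} During intervals where $\mathcal K_j^\star$ is applied, write $e=\xi-\xi_j^r$. Then $\dot e=(\mathcal A_j-\mathcal B_j\mathcal K_j^\star)e$. Assumption~\ref{ass:mu2_margin}(ii) and the standard log-norm estimate give $\|e(t)\|\le e^{-\gamma_j(t-s)}\|e(s)\|$.

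During learning windows and the mixed-mode phase, the applied gains are stabilizing but not contractive in $\|\cdot\|_2$. There I would bound the growth by a factor $\nu\ge1$, uniform over the finite set of gains and the bounded windows of length at most $\zeta_{\max}$; the exploration signal $e_0$ contributes a bounded additive term. At a switch, $\|\xi-\xi_{j+1}^r\|\le\|\xi-\xi_j^r\|+\Delta_r$ by Assumption~\ref{bound}.

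Combining these over one interval gives
\[
\|e(t_{j+1})\|\le \nu e^{-\gamma_{\min}\ln(2\nu)/\gamma_{\min}}\|e(t_j)\|+\beta=\tfrac12\|e(t_j)\|+\beta,
\]
where $\beta$ collects $\Delta_r$ and the excitation effects. The contraction factor $1/2$ makes the error recursion a convergent geometric series. Since $J<\infty$, the last mode is purely exponentially stable. Iterating the recursion and interpolating within intervals yields $c_1e^{-\alpha t}\|e(0)\|+c_2$, with $\alpha$ on the order of $\ln 2/(\omega_{\min})$ or $\gamma_{\min}$ after the final switch, and $c_2\approx 2\beta$ plus a $\nu$ factor.

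\emph{Main obstacle.} The hardest step is justifying the uniform growth factor $\nu$ across learning windows, together with the hypothesis of Lemma~\ref{lem:mu2_hurwitz}(ii) for the mixed-mode gain. These are exactly where the theorem's constants hide. I would handle both by defining $\nu$ as a bound on the transition matrices over windows of length at most $2\zeta_{\max}$ for the finitely many gains involved, and by stating the gain-gap condition as part of Assumption~\ref{ass:mu2_margin}.
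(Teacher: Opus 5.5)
Your timing argument and learning-convergence chain match the paper's proof. You use the same estimate $s_j<t_j+\Delta T_\ell+2\zeta_{\max}$, which yields a residual time above $\ln(2\nu)/\gamma_{\min}$, and the same Kleinman-plus-Lemma~\ref{lem:mu2_hurwitz} linkage. The paper never verifies the hypotheses of Lemma~\ref{lem:mu2_hurwitz} either, and $\rho_j<\gamma_j$ is not among the theorem's assumptions, so you are right to flag them.

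The stability part takes a different route. The paper uses mode-dependent Lyapunov functions $V_j=\tilde\xi_j^\top\mathcal P_j^\star\tilde\xi_j$ and the squared jump bound $\|a+b\|^2\le 2\|a\|^2+2\|b\|^2$. Its $\nu:=\max_{i,j}\lambda_{\max}(\mathcal P_i^{-1}\mathcal P_j)$ is the price of changing Lyapunov matrices at a switch. It then simply asserts $V_j(s_j^+)\le V_j(t_j^+)$, treating the phase before $s_j$ as non-expansive without argument. You stay in $\|\cdot\|_2$, the norm in which Assumption~\ref{ass:mu2_margin}(ii) contracts every mode. So you never compare Riccati solutions, the jump costs only an additive $\Delta_r$, and you spend $\nu$ on the pre-$s_j$ transient, which is exactly the step the paper skips. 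The trade-off is that the paper's $\nu$ depends only on the optimal $\mathcal P_j^\star$. Yours depends on learned gains, so you are proving the statement under a reinterpreted hypothesis.

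In fact you need no growth factor at all, which removes your ``main obstacle.'' The proof of Lemma~\ref{lem:mu2_hurwitz}(ii) gives $\mu_2(\mathcal A_j-\mathcal B_j\hat{\mathcal K}_j)<0$. Assumption~\ref{ass:mu2_margin}(i)--(ii) with $\rho_{j-1}<\gamma_{j-1}$ gives $\mu_2(\mathcal A_j-\mathcal B_j\mathcal K_{j-1}^\star)\le-\gamma_{j-1}+\rho_{j-1}<0$. So, contrary to your remark, every gain applied on $[t_j,s_j)$ is $\|\cdot\|_2$-contractive. Only additive forcing remains:
\begin{itemize}
\item the exploration input;
\item the equilibrium mismatch $-\mathcal B_j(\mathcal K-\mathcal K_j^\star)\xi_j^r$, which is missing from your $\beta$.
\end{itemize}
Because the log-norm is negative, both produce bounded terms independent of window length.

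This also fixes a concrete slip. The interval $[t_j,s_j)$ contains the whole inter-learning interval $\Delta T_\ell$ under $\hat{\mathcal K}_j$, not just windows of length $2\zeta_{\max}$. A growth bound over it would depend on $\Delta T_\ell$ and make the hypothesis circular. With contraction, your recursion works for any $\nu\ge1$, including the paper's.

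Two smaller points:
\begin{itemize}
\item One halving per interval does not give $\alpha\approx\ln 2/\omega_{\min}$, because intervals can be far longer than $\omega_{\min}$. Uniform contraction at the smallest log-norm margin gives $\alpha$ directly.
\item The equivalent-pair lemma yields a pair $(\hat{\mathcal A},\hat{\mathcal B})$ per iteration, not a fixed one. So invoking Lemma~\ref{kleinman} on mixed data is unjustified. You don't need it, though: you only need the returned gain to satisfy the gap condition.
\end{itemize}
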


\begin{proof}
The convergence of a single off-policy learning phase follows the proof in \cite{jiang2012computational}. 
Briefly, the matrices $(\mathcal{P}^k, \mathcal{K}^{k+1})$ obtained from Kleinman’s algorithm satisfy the condition in \eqref{matrix_off}. 
Conversely, under Assumption~\ref{off_unique}, any pair $(\mathcal{P}^k, \mathcal{K}^{k+1})$ that satisfies \eqref{matrix_off} also corresponds to the update steps \eqref{eq:lyap_pk} and \eqref{eq:policy_update} of Kleinman’s algorithm. 
Therefore, when Kleinman’s algorithm is initialized with a stabilizing gain $\mathcal{K}^0$, each subsequent iteration remains stabilizing, which guarantees the closed-loop stability throughout the learning process.
Furthermore, Lemma~\ref{lem:mu2_hurwitz} establishes the stability linkage between successive learning phases, thereby ensuring the convergence of the periodic learning scheme.

{Since $\Delta T_{\ell}+2\zeta_{\max}<\omega_{\min}-\frac{\ln(2\nu)}{\gamma_{\min}}$, at least one off-policy update within each dwell-time interval. This guarantees that  $\mathcal K_j^\star$ can be learned from the clear mode-$j$
 data.
Let $s_j\in[t_j,t_{j+1})$ denote the first time instant at which the controller
 is switched to the mode-wise optimal gain $\mathcal K_j^\star$ and $
 \delta_j := t_{j+1}-s_j  $ denote the effective optimal dwell-time. We have $\Delta T_{\ell}+2\zeta_{\max}<t_{j+1}-t_j-\frac{\ln(2\nu)}{\gamma_{\min}}$ and $s_j<t_j+\Delta T_{\ell}+2\zeta_{\max}<t_{j+1}-\frac{\ln(2\nu)}{\gamma_{\min}}$. Thus,
 $\delta_j> \frac{\ln(2\nu)}{\gamma_{\min}}$,}

We next prove the system stability.
For $t\in[s_j,t_{j+1})$, define $\tilde\xi_j(t)=\xi(t)-\xi_j^r$ with the error dynamics 
\[
\vspace{-0.2cm}
\dot{\tilde\xi}_j
=(\mathcal A_j-\mathcal B_j\mathcal K_j^\star)\tilde\xi_j .
\]
By Assumption~\ref{ass:mu2_margin}(ii), 
$\frac{d}{dt}\|\tilde\xi_j(t)\|_2
\!\le\!
\mu_2(\mathcal A_j\!-\!\mathcal B_j\mathcal K_j^\star)\|\tilde\xi_j(t)\|_2
\!\le\!
\!-\!\gamma_j\|\tilde\xi_j(t)\|_2$.
Hence,
$
\|\tilde\xi_j(t_{j+1}^-)\|_2
\le
e^{-\gamma_j\delta_j}\|\tilde\xi_j(s_j^+)\|_2 .
$

Define the Lyapunov function
$
V_j(t):=\tilde\xi_j(t)^\top \mathcal P_j^\star \tilde\xi_j(t),
$ $t\in[t_j,t_{j+1}),
$
with $\mathcal P_j^\star \succ 0$.
At $t_{j+1}$, $\xi$ is continuous but the reference changes, so
$
\tilde\xi_{j+1}(t_{j+1}^+)
=
\tilde\xi_j(t_{j+1}^-)+(\xi_j^r-\xi_{j+1}^r).
$
Using $\|a+b\|_2^2\le 2\|a\|_2^2+2\|b\|_2^2$,
$
\|\tilde\xi_{j+1}(t_{j+1}^+)\|_2^2
\le
2\|\tilde\xi_j(t_{j+1}^-)\|_2^2
+2\|\xi_j^r-\xi_{j+1}^r\|_2^2.
$
Therefore,
\[
\begin{aligned}
V_{j+1}(t_{j+1}^+)
&\le
\lambda_{\max}(\mathcal P_{j+1}^\star)\|\tilde\xi_{j+1}(t_{j+1}^+)\|_2^2\\
&
\le
2\lambda_{\max}(\mathcal P_{j+1}^\star)\|\tilde\xi_j(t_{j+1}^-)\|_2^2
+ c_{r,j},
\end{aligned}
\]
where
$
c_r=2\,\lambda_{\max}(\mathcal P_{j+1})
\|\xi_j^r-\xi_{j+1}^r\|_2^2 .
$

Take $\nu := \max_{\sigma_i,\sigma_j\in\mathcal M} $$\lambda_{\max}\!\left(\mathcal P_i^{-1}\mathcal P_j\right) \ge 1$, we have
$
\lambda_{\max}(\mathcal P_{j+1}^\star)
\le
\nu\lambda_{\max}(\mathcal P_j^\star),
$
and 
$V_{j+1}(t_{j+1}^+)
\le
2\nu e^{-2\gamma_j\delta_j}\,V_j(s_j^+) + c_{r,j}$. 
Since $V_j(s_j^+)\le V_j(t_j^+)$,
we obtain
\begin{equation}\label{decrease}
V_{j+1}(t_{j+1}^+)
\le
\rho_j\,V_j(t_j^+) + c_{r,j},
\qquad
\rho_j=2\nu e^{-2\gamma_j\delta_j}.
\end{equation}
By the effective dwell-time condition,
$\rho_j\le \rho:=2\nu e^{-2\gamma_{\min}\delta_{\min}}<1$,
where $\delta_{\min}:=\min_j\delta_j$. 
Equation \eqref{decrease}
gives $V_j(t_j^+)
\le
\rho^j V_0(t_0^+) + \frac{\bar c_r}{1-\rho}$ with  $\bar c_r:=\max_j c_{r,j}$.

Using $V_j(t)\ge \lambda_{\min}(\mathcal P_j^\star)\|\tilde\xi_j(t)\|_2^2$
and $V_0(t_0^+)\le \lambda_{\max}(\mathcal P_0^\star)\|\tilde\xi_0(t_0^+)\|_2^2$,
we obtain
\[
\|\xi(t)-\xi_{\sigma(t)}^r\|_2
\le
c_1 e^{-\alpha t}\|\xi(0)-\xi_{\sigma(0)}^r\|_2 + c_2,
\]
where $\alpha = \frac{-\ln\rho}{2\delta_{\min}}$, $c_1 := \sqrt{\frac{\overline\lambda}{\underline\lambda}}$ and $c_2 := \sqrt{\frac{2\overline\lambda}{\underline\lambda}}\;
\frac{\Delta_r}{\sqrt{1-\rho}}$ with $\underline\lambda := \min_{j}\lambda_{\min}(\mathcal P_j^\star),
$ $
\overline\lambda := \max_{j}\lambda_{\max}(\mathcal P_j^\star)$
\end{proof}

Accordingly, when the switching terminates after $t_J$, yielding $\Delta_r = 0$, the following conclusion holds.

\begin{corollary}\label{cor:final_convergence}
Under the conditions of Theorem~\ref{thm:dwell_stability}, if the switching terminates after $t_J$,
i.e., $\sigma(t)\equiv \sigma_J$ for all $t\ge t_J$,
then there exist constants $c_f>0$ and $\alpha_f>0$ such that
\[
\|\xi(t)-\xi_J^r\|
\le
c_f e^{-\alpha_f (t-t_J)}\|\xi(t_J)-\xi_J^r\|,
\qquad \forall t\ge t_J,
\]
and hence $\lim_{t\to\infty}\xi(t)=\xi_J^r$.
\end{corollary}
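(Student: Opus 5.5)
The plan is to treat the last interval $[t_J,\infty)$ as an ordinary single-mode problem, since no switch occurs after $t_J$. Theorem~\ref{thm:dwell_stability} ensures that periodic learning is still running. Its inter-learning condition guarantees that within at most $\Delta T_\ell+2\zeta_{\max}$ after $t_J$, Algorithm~1 completes a learning phase on clear mode-$J$ data. Before that, there may be one phase on mixed data from modes $J-1$ and $J$. Lemma~\ref{lem:mu2_hurwitz} shows that every gain used along the way keeps the mode-$J$ closed loop Hurwitz: part~1 handles the mixed-mode initialization, and part~2 handles the resulting $\hat{\mathcal K}_J$. The single-phase convergence argument (Kleinman, Lemma~\ref{kleinman}) then yields $\mathcal K_J^\star$ at some time $s_J\le t_J+\Delta T_\ell+2\zeta_{\max}$.

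Every later learning phase is initialized with $\mathcal K_J^\star$ and uses mode-$J$ data only. Since the optimum is a fixed point of the policy iteration, $\mathcal K^{k}=\mathcal K_J^\star$ for all $k$ in that phase. One point needs care here: during data collection the input is $-\mathcal K_J^\star\xi+e_0$ with an exploration signal $e_0$. I will assume, as is standard in the off-policy setting, that $e_0$ is switched off after $s_J$, so that exploration does not enter the asymptotic bound. Under that assumption, for $t\ge s_J$ the error $\tilde\xi_J=\xi-\xi_J^r$ obeys $\dot{\tilde\xi}_J=(\mathcal A_J-\mathcal B_J\mathcal K_J^\star)\tilde\xi_J$. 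Assumption~\ref{ass:mu2_margin}(ii) then gives $\|\tilde\xi_J(t)\|\le e^{-\gamma_J(t-s_J)}\|\tilde\xi_J(s_J)\|$.

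The transient $[t_J,s_J]$ has finite length $L:=\Delta T_\ell+2\zeta_{\max}$. On it the closed loop is linear and time-invariant on each sub-interval, with finitely many Hurwitz matrices (the intermediate gains). It is driven at most by a bounded exploration input; I will absorb this input by assuming $e_0$ vanishes or is proportional to the state error, since otherwise only a bounded additive term appears. This gives $\|\tilde\xi_J(s_J)\|\le M\|\tilde\xi_J(t_J)\|$ with $M:=\max e^{\|\mathcal A_J-\mathcal B_J K\|L}$ taken over the finitely many gains $K$ used. Combining the two estimates,
\[
\|\xi(t)-\xi_J^r\|\le M e^{\gamma_J L} e^{-\gamma_J(t-t_J)}\|\xi(t_J)-\xi_J^r\|,
\]
so the claim holds with $c_f=Me^{\gamma_J L}$ and $\alpha_f=\gamma_J$. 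The limit statement follows at once.

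An alternative route is to specialize the last step of the proof of Theorem~\ref{thm:dwell_stability} with $\Delta_r=0$, which gives $c_2=0$. The direct argument above is cleaner, because it avoids the jump estimate at $t_J$.

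The main obstacle is the transient before $\mathcal K_J^\star$ is reached, in particular the role of the exploration signal $e_0$. Bounding the homogeneous part needs only finiteness of $L$ and of the set of gains used. A genuinely state-independent excitation, however, would add a constant offset to the bound. I would therefore make explicit that exploration is removed, or is state-proportional, once the learned gain is fixed, and that the fixed-point property keeps subsequent learning phases from changing the gain.
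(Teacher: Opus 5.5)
\textbf{The step that fails.} Your transient estimate $\|\tilde\xi_J(s_J)\|\le M\|\tilde\xi_J(t_J)\|$ does not hold, and the exploration signal $e_0$ is not the only reason. The point $\xi_J^r$ is an equilibrium only of the mode-$J$ loop closed with $\mathcal K_J^\star$, since it is defined by $(\mathcal A_J-\mathcal B_J\mathcal K_J^\star)\xi_J^r=-\bar d_J$. On $[t_J,s_J)$ the applied gain $K$ is $\mathcal K_{J-1}^\star$ or the mixed-data gain $\hat{\mathcal K}_J$, so the error obeys
\[
\dot{\tilde\xi}_J=(\mathcal A_J-\mathcal B_J K)\tilde\xi_J+\mathcal B_J(\mathcal K_J^\star-K)\xi_J^r+\mathcal B_J e_0 ,
\]
which is affine, not linear, in $\tilde\xi_J$.

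The middle term vanishes only if $\mathcal B_J K\xi_J^r=\mathcal B_J\mathcal K_J^\star\xi_J^r$, and this generically fails. With the integral augmentation, the $z$-component of the closed-loop equilibrium $-(\mathcal A_J-\mathcal B_J K)^{-1}\bar d_J$ depends on $K$. Concretely, if $\xi(t_J)=\xi_J^r$, then in general $\dot\xi(t_J^+)\neq 0$ and the state leaves $\xi_J^r$, while your bound (and the corollary's right-hand side) equals zero.

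Nor can the exploration be removed on this interval. The clean-mode phase that produces $\mathcal K_J^\star$ lies inside $[t_J,s_J]$ and needs $e_0$ to meet the rank condition \eqref{rank}. An excitation proportional to $\xi-\xi_J^r$ is not implementable, because $\xi_J^r$ is unknown to the DM. What the transient actually gives is $\|\tilde\xi_J(s_J)\|\le M\|\tilde\xi_J(t_J)\|+b$, where $b>0$ collects the equilibrium mismatch and the exploration.

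\textbf{What survives.} Your argument after $s_J$ is sound, given your added hypothesis that no exploration is injected after $s_J$. That hypothesis is genuinely extra: Algorithm~1 keeps triggering learning phases, and the DM does not know that $t_J$ was the last switch. Combining the two pieces gives $\|\xi(t)-\xi_J^r\|\le c_f e^{-\gamma_J(t-t_J)}\big(\|\xi(t_J)-\xi_J^r\|+b\big)$. Alternatively, you get the purely multiplicative bound if the clock starts at $s_J$ instead of $t_J$. Either version still yields $\xi(t)\to\xi_J^r$, but the homogeneous inequality anchored at $t_J$ does not follow without further assumptions.

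Falling back on the paper's route does not close this gap. That route sets $\Delta_r=0$ in Theorem~\ref{thm:dwell_stability} so that $c_2=0$. However, the theorem's bound rests on the step $V_j(s_j^+)\le V_j(t_j^+)$, which claims monotone decrease over exactly the interval where the gain is not $\mathcal K_j^\star$ and exploration is active. Your split at $s_J$ is the better structure, because it confronts that transient explicitly; it just has to carry the additive term $b$.
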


\section{Simulation Studies}

In this section, we validate the performance of the proposed mitigation scheme using the lane-change scenario~\cite{xu2025game}. 

Consider a highway scenario with two vehicles where the leading vehicle aims to maintain a safe distance of 
\(\pi = 73\,\mathrm{m}\) \cite{highwaycode126} from the following vehicle during the lane-change maneuver.  An insider vehicle may either behave cooperatively to accommodate the lane change or act selfishly for personal gain. In the cooperative mode, the desired spacing is set to \(\pi = 73\,\mathrm{m}\), whereas in the adversarial mode aimed at inducing a collision,
 \(\pi = 0\,\mathrm{m}\). 
Accordingly, we consider three switching modes: one cooperative mode and two various insider-threat modes.

In this case, DM only knows the desired distance is $x_{j,d}^r=\pi\in\mathbb{R}$ but the desired velocity $x_{j,s}^r\in\mathbb{R}^2$  is unknown. 
The additional state $z$ is designed as
$\dot z=Cx-\pi$ with  $C=[1,0,0]$. Under the three insider behavior modes, the system dynamics evolves as 
$\dot x \!=\! A_{1,\sigma_j} x \!+\! B_1 u_1+d_{\sigma_j},
B_1 =[0;1;0]$,
with $A_{\sigma_1}=\begin{bmatrix}
 0 & 1 & -1\\
 0 & 0 & 0\\
 0.5345        & 0.2893 & -1.84770\\
\end{bmatrix}$, $A_{\sigma_2}=\begin{bmatrix}
 0 & 1 & -1\\
 0 & 0 & 0\\
 0.240          & 0.35 & -1.60\\
\end{bmatrix}$, $A_{\sigma_3}=\begin{bmatrix}
 0 & 1 & -1\\
 0 & 0 & 0\\
 0.225           & 0.30 & -1.80\\
\end{bmatrix}$ and $d_{1,\sigma_1}=[0;0;18]$,  $d_{1,\sigma_1}=[0;0;25]$, $d_{1,\sigma_1}=[0;0;30]$.

The delay time $\tau=0.1$s. The length of each learning interval is 0.02s, and the exploration noise is selected to be a sinusoidal signal
$
\epsilon(t) = \sum_{k=1}^{100} \sin(\omega_k t),
$
where $\omega_k$, for $k = 1,\ldots,100$, are randomly generated frequencies uniformly distributed over $[-50,\,50]$. Before the learning phase, the system is operated for $2\tau$ seconds as a warm start to fill the delay window, after which data collection starts.  The dwell time $\omega_j=24$s.
\begin{figure}[H]
	\centering	
    \vspace{-0.2cm}
		\includegraphics[width=1\columnwidth]{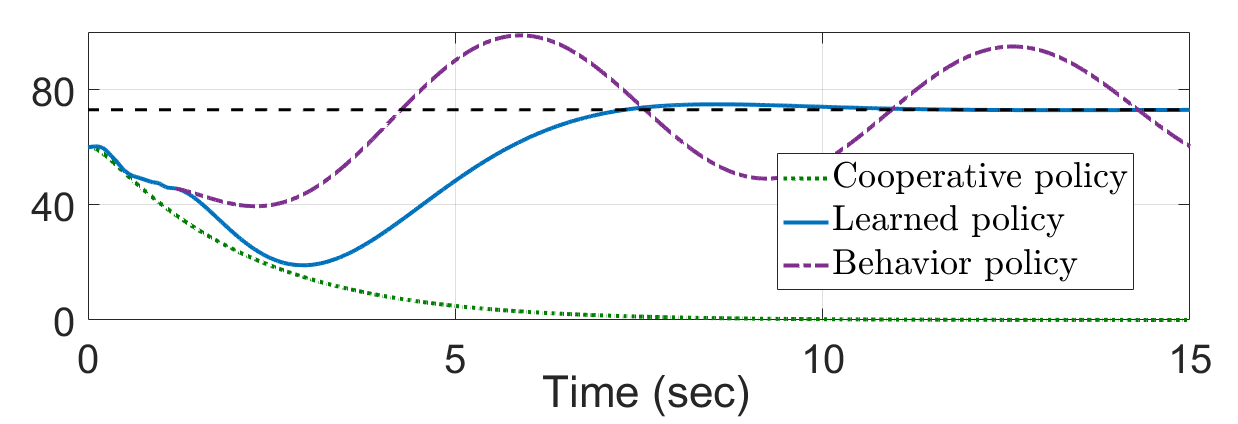}
\\
\vspace{-0.2cm}
	\caption{State trajectories switching
between three operating modes.}
\vspace{-0.3cm}
	\label{fig38}
\end{figure}

We first consider a single-mode adversarial insider aiming to induce a collision.
Fig.~\ref{fig38} compares the inter-vehicle distance resulting from the learned mitigation policy (blue) with those under the initial behavior policy (purple) and the nominal cooperative policy (green). The learned policy maintains a safe spacing, whereas the other two policies violate the safety constraint. In extreme cases, the cooperative policy drives the spacing to zero, leading to a collision.

Next, we consider the case where the insider behavior switches among three different modes.
By setting 
$\Delta T_{\ell}=10$s and $\Delta T_{\ell}=8$s, respectively, at least two learning updates are performed within each switching interval. This leads to two types of updates: those based on all clear-mode data and those based on partially mixed-mode data.
Fig.~\ref{fig17} shows the resulting system performance. When switching occurs, learning with all clear-mode data yields smaller trajectory oscillations. Although the update from mixed-mode data is biased relative to the active mode, the subsequent update using clear-mode data compensates for this bias, allowing the DM to maintain a safe headway.

\begin{figure}[H]
	\centering	
			\includegraphics[width=1\columnwidth]{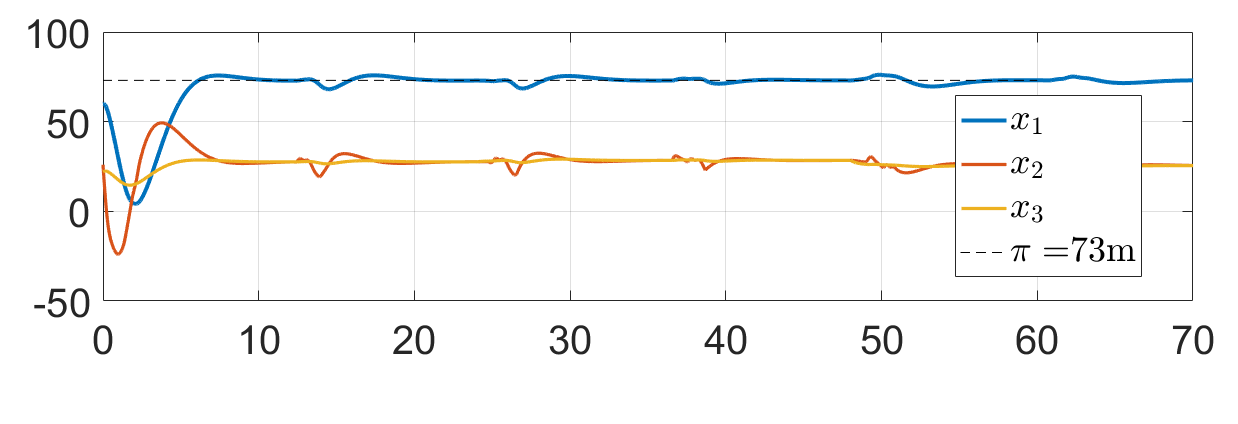}
\\
\vspace{-0.3cm}\includegraphics[width=1\columnwidth]{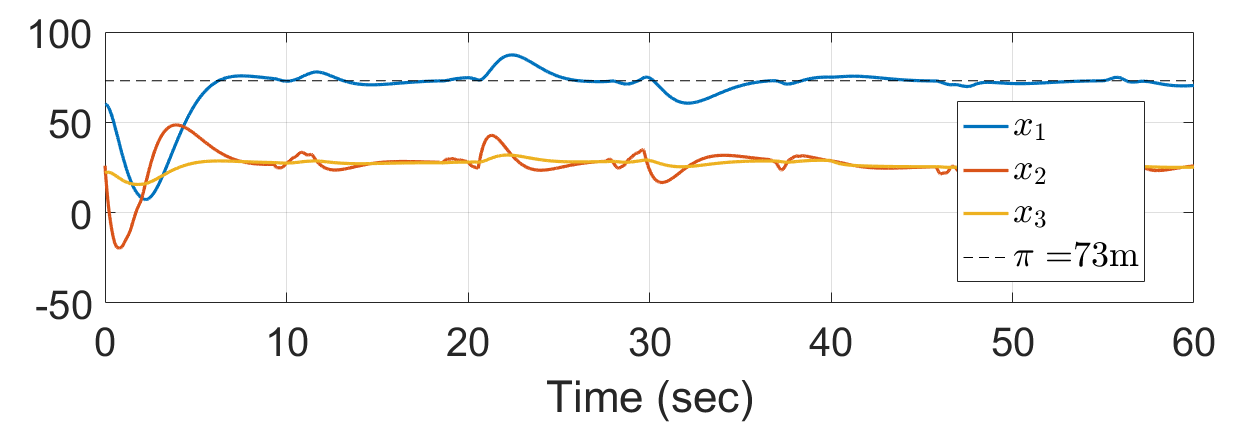}
\\
\vspace{-0.2cm}
	\caption{Convergence performance of system state. Top: results obtained using fully clean data.
Bottom: results obtained using partially mixed data.} 
\vspace{-0.3cm}
	\label{fig17}
\end{figure}


\vspace{-0.2cm}

\section{Concluding Remarks}
In this paper, we presented a periodic off-policy learning scheme to mitigate insider threats with varying intentions. We developed a proportional-integral-augmented delayed incremental design to eliminate steady-state bias and reject mode-dependent insider disturbances. By properly designing the inter-learning interval, the incremental learning process is ensured to operate effectively. Convergence guarantees for both the learning process and the closed-loop system are established, and the corresponding mitigation performance is characterized. A potential direction for future research should consider extending the proposed framework to multi-agent systems and studying the distributed mitigation learning.

\bibliographystyle{IEEEtran}
\bibliography{reference}

\end{document}